\documentclass[12pt]{amsart}
\usepackage{amsmath}
\usepackage{amsthm}
\usepackage{amsfonts}
\usepackage{amssymb}
\newtheorem{Theorem}{Theorem}[section]
\newtheorem{Proposition}[Theorem]{Proposition}
\newtheorem{Lemma}[Theorem]{Lemma}
\newtheorem{Corollary}[Theorem]{Corollary}
\theoremstyle{definition}
\newtheorem{Definition}{Definition}[section]
\theoremstyle{remark}

\numberwithin{equation}{section}

\newcommand{\Z}{{\mathbb Z}}
\newcommand{\R}{{\mathbb R}}
\newcommand{\C}{{\mathbb C}}

\newcommand{\SL}{{\mathcal {SL}}}

\begin{document}

\title[Toda maps]{Toda maps, cocycles, and canonical systems}

\author{Christian Remling}

\address{Mathematics Department\\
University of Oklahoma\\
Norman, OK 73019}

\email{christian.remling@ou.edu}

\urladdr{www.math.ou.edu/$\sim$cremling}

\date{November 30, 2017; revised January 12, 2018}

\thanks{2010 {\it Mathematics Subject Classification.} Primary 34L40 37K10 47B36 81Q10}

\keywords{Jacobi matrix, Toda flow, cocycle, canonical system}

\begin{abstract}
I present a discussion of the hierarchy of Toda flows that gives center stage
to the associated cocycles and the maps they induce on the $m$ functions.
In the second part, these ideas are then applied to canonical systems;
an important feature of this discussion will be my proposal that the role of the shift on Jacobi matrices
should now be taken over by the more general class of twisted shifts.
\end{abstract}
\maketitle
\section{Introduction}
The first part of this paper will present and advertise a certain new view of Toda flows.
In the second part, I then discuss flows on canonical systems from this point
of view, or rather make some suggestions; this is a completely new topic that does not seem
to have received much attention yet.

The basic idea will be to take the shift map on Jacobi matrices and its transfer matrix cocycle as the starting point and then extend
this to a cocycle for the action of a larger group (see the next section please for the precise definitions). This structure displays
a remarkable amount of rigidity in great generality: any such joint cocycle will update the Titchmarsh-Weyl $m$ functions along the action (Theorem \ref{T1.4}),
and this in turn will imply that the group action has to preserve the spectral properties and the reflection coefficients of the Jacobi
matrices it acts on (Theorem \ref{T1.2}). Furthermore, only finite gap type operators can have periodic orbits (Theorem \ref{T1.3}).
Moreover, with small modifications,
all this works for the action of any group that contains an infinite cyclic group (this is the part that will be acting by shifts)
as a normal subgroup; in this paper and for the discussion of the classical hierarchies, though, we will only need $G=\R^N\times\Z$,
with $1\le N\le\infty$.

Toda flows and more general integrable hierarchies
are a classical subject that has been studied extensively; see, for example, \cite{Dickey,GeHol1,GeHol2,Teschl} for textbook
style treatments and \cite{Dametal,DamGold} for recent work and further references. Since we will
develop it more or less from scratch here, it is clear that not everything in this paper can
be completely new, and indeed, there will be considerable overlap with what has been
done in the literature. However, since I approach the whole subject from a point of view that is not the
usual one, I hope that even those parts that discuss very well known facts
(such as the statement that any two Toda flows commute, or that they act by unitary conjugation)
will be of some interest.

The following section continues this introduction: it gives a non-technical overview of the general ideas; further details as well as the
proofs will be discussed in Sections 3--5. Finally, in Section 6, I use these ideas as a guideline to attempt
a similar approach to evolutions of \textit{general }canonical systems, which is a new topic that (strange as that may sound)
seems to have received hardly any attention yet. I have emphasized the word \textit{general }here: if only a subclass of special
canonical systems is evolved, then one is back on familiar territory and in fact the Toda, KdV, AKNS hierarchies are all of this type.
My main point in Section 6 will probably be the proposal to replace
the shift by a more general family of flows and make the flows to be constructed commute with these
rather than the shift itself.

\textit{Acknowledgment: }I'd like to thank Injo Hur and Darren Ong for inspiring discussions on these topics
and the anonymous referee for a very careful reading of the paper and helpful comments.
\section{Toda flows, cocycles, and Toda maps}
A \textit{Jacobi matrix }is a difference operator of the form
\begin{equation}
\label{jac}
(Ju)_n = a_n u_{n+1} + a_{n-1}u_{n-1} + b_n u_n .
\end{equation}
Here, we assume that $a_n > 0$ and $b_n\in\mathbb R$ are bounded sequences, and then
$J$ is a bounded self-adjoint operator on $\ell^2(\Z)$.

It is convenient to have the separate notation $\tau u$ available when
the difference expression from \eqref{jac} is applied
to an arbitrary sequence $u$, not necessarily in $\ell^2$.

We denote by $\mathcal J$ the space of all such (bounded) Jacobi matrices, and we endow it with the metric
\begin{equation}
\label{djac}
d(J,J') = \sum_{n\in\Z} 2^{-|n|}\left( |a_n-a'_n|+|b_n-b'_n|\right) .
\end{equation}
This metric is often much more useful than the operator norm because it tends to
make sets compact more easily, and it interacts well with other quantities of interest in spectral theory such as
spectral measures and $m$ functions.

\textit{Toda flows }are global flows on $\mathcal J$.
Alternatively, and this is my preferred point of view, we can think of the Toda hierarchy as an action of the (abelian) group
of polynomials $\mathcal P=\mathbb R[x]$ (with pointwise addition as the group operation) on the space $\mathcal J$ of Jacobi matrices.
Before we discuss the approach via cocycles
in detail, let me quickly review what is probably the most common way to construct the Toda hierarchy. Suppose
that $p\in\mathcal P$ is given. Then the \textit{Lax equation}
\begin{equation}
\label{toda}
\dot{J} = [p(J)_a, J]
\end{equation}
defines a global flow \cite[Theorem 12.6]{Teschl}, and we can then define $p\cdot J$ as the time one map of this flow. Here, the anti-symmetric part
$p(J)_a$ of $p(J)$ is defined via its matrix representation in terms of the standard unit vectors $\delta_n\in\ell^2$:
if we write $X_{jk}=\langle \delta_j, X\delta_k\rangle$ for a bounded self-adjoint operator $X$,
then $(X_a)_{jk}=X_{jk}$ for $j<k$ and $=-X_{jk}$ if $j>k$, and $(X_a)_{jj}=0$.

One can then establish (with a fair amount of effort, that is) the following properties:
First of all, these flows all commute with each other, and this we already anticipated by saying that the abelian group
$\mathcal P$ acts on $\mathcal J$. (This commutativity does then ensure that we have a group action because the
right-hand side of \eqref{toda} is linear in $p$.)
Moreover, they also commute with the shift $S$ that sends a Jacobi matrix $J$ to $SJ$, which has
shifted coefficients $(a_{n+1},b_{n+1})$. So we have actually obtained an action of the larger abelian group $G=\mathcal P\times\Z$,
with the second factor acting by shifts.
Finally, $g\cdot J = U^*JU$ for some unitary $U=U(g,J)$. (All these properties will be given alternative explanations
later in this section and the next as we develop the material.)

The maps $J\mapsto g\cdot J$ are continuous with respect to $d$, in the sense spelled out below;
this property will be important for us, and it is usually not mentioned in the standard treatments,
so I'll state it separately and also provide a proof later, in Section 3.
\begin{Theorem}
\label{T2.1}
For any $R>0$ and $p\in\mathcal P$, the map $J\mapsto p\cdot J$ is a homeomorphism of $(\mathcal J_R, d)$,
where $\mathcal J_R = \{ J\in\mathcal J: \|J\|\le R \}$.
\end{Theorem}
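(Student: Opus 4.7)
Write $L=L_p$ for the time-one map $J\mapsto p\cdot J$. The plan is to prove directly that $L$ is Lipschitz on $(\mathcal J_R, d)$, which is of course more than enough. The stated property that Toda flows act by unitary conjugation gives $\|L(J)\|=\|J\|$, so $L$ preserves $\mathcal J_R$, and reversing time in \eqref{toda} amounts to replacing $p$ by $-p$, so that $L^{-1}=L_{-p}$ is a map of exactly the same form. Hence once we know that every such $L$ is Lipschitz (hence continuous) on $(\mathcal J_R,d)$, we conclude that $L$ is a bijective bicontinuous map of $\mathcal J_R$, i.e.\ a homeomorphism.

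The key structural observation is that when \eqref{toda} is written out entrywise, one obtains a system of ODEs of the form
\[
\dot a_n = F_n\bigl(\{a_j,b_j: |j-n|\le D\}\bigr), \qquad \dot b_n = G_n\bigl(\{a_j,b_j: |j-n|\le D\}\bigr),
\]
where $F_n,G_n$ are polynomials whose degree and coefficients depend only on $p$ (not on $n$), and $D=D(\deg p)$. This is immediate from the fact that $p(J)_a$ is a banded operator of bandwidth $\le\deg p$ whose matrix entries are universal polynomials in nearby $a_k,b_k$, and the commutator with $J$ widens the band by at most one. On $\mathcal J_R$ the entries satisfy $|a_n|,|b_n|\le R$, so each $F_n$ obeys a Lipschitz bound
\[
|F_n(J)-F_n(J')|\le K\sum_{|j-n|\le D}\bigl(|a_j-a'_j|+|b_j-b'_j|\bigr)
\]
with a constant $K=K(R,p)$ that is uniform in $n$, and analogously for $G_n$.

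With this in hand, the Lipschitz continuity of $L$ falls out of Gronwall's inequality applied to $d(J(t),J'(t))$ between two solutions of \eqref{toda} in $\mathcal J_R$. Differentiating under the sum, using the Lipschitz bound above, swapping the order of summation over $n$ and $j$, and noting the elementary estimate $\sum_{|n-j|\le D} 2^{-|n|}\le (2D+1)2^D\cdot 2^{-|j|}$, one gets
\[
\tfrac{d}{dt}\,d(J(t),J'(t))\le C\,d(J(t),J'(t)), \qquad C=C(R,p),
\]
and therefore $d(L(J),L(J'))\le e^C\,d(J,J')$. There is no serious obstacle in this argument: existence and uniqueness for the Lax flow in the relevant class is taken as known from the background cited in the introduction, and the rest is a routine verification of the finite-range polynomial structure of the Toda equations together with an elementary convolution-type estimate against the weights $2^{-|n|}$ built into $d$.
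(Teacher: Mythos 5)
Your proposal is correct and follows essentially the same route as the paper: exploit the banded, finite-range polynomial structure of $[p(J)_a,J]$ to get a Lipschitz bound for the right-hand side in the weighted metric $d$ (with the same convolution-type estimate against the weights $2^{-|n|}$), then conclude an $e^{Ct}$-Lipschitz bound for the time-one map, the paper doing this via the Picard iterates where you invoke Gronwall directly. The handling of invertibility via the $(-p)$-flow matches the paper's implicit use of the group-action structure, so there is nothing to add.
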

What is usually discussed is the continuity with respect
to the operator norm, and a similar argument will establish Theorem \ref{T2.1}.
Observe also that since $p\cdot J$ is unitarily equivalent to $J$, the operator norm is preserved and $\mathcal J_R$ is indeed
invariant under the action.

The central objects of this paper are cocycles (in the sense the word is used in dynamical systems). Let me review the definition.
A cocycle takes values in a group, and for us, a special group of matrix functions will be of great importance, so let's introduce the name
\[
\SL = \{ T:\C \to SL(2,\C) : T(z) \:\textrm{\rm entire, } T(x)\in SL(2,\R) \textrm{ for }x\in\R \}
\]
for this; also, note that $\SL$ indeed
is a group with the obvious operation of pointwise matrix multiplication.

\begin{Definition}
An \textit{$\SL$-cocycle }associated with an action of a group $G$ on a space $X$ is a map $T: G\times X\to\SL$
satisfying the \textit{cocycle identity}
\begin{equation}
\label{ccid}
T(gh, x) = T(g, h\cdot x) T(h, x) .
\end{equation}
\end{Definition}
We will occasionally encounter cocycles taking values in other groups such as $\textrm{SL}(2,\C)$, $\textrm{SL}(2,\R)$, $\C^{\times}$,
and of course these are defined in the same way. The reader interested in cocycles in general may consult \cite{Feres,Katok,KHas} for more
information, but I should also issue a warning that while this material makes for interesting reading, it will be largely irrelevant to what
we do here.

For our purposes, the most fundamental example of an $\SL$-cocycle is given by the transfer matrix $T(n;J)$. In this case,
$G=\Z$, and this acts by shifts on $X=\mathcal J$, that is, $n\cdot J= S^n J$. Now $T$ is defined as
\begin{equation}
\label{shiftcc}
T(1;J) = A(J), \quad
A(J)\equiv\begin{pmatrix} \frac{z-b_1}{a_1} & \frac{1}{a_1} \\ -a_1 & 0 \end{pmatrix} ,
\end{equation}
and then $T(n;J)=A((n-1)\cdot J)A((n-2)\cdot J) \cdots A(J)$
and $T(-n;J)=T(n;(-n)\cdot J)^{-1}$ for general $n\ge 1$, and $T(0;J)=1$.
These latter definitions, having chosen $T(1;J)$, are of course forced on us by the cocycle identity.

It is then easily verified that $T$ is an $\SL$-cocycle, and it is also clear that,
conversely, any cocycle for an action of $G=\Z$ will be of this form,
for some matrix function $A:\mathcal J\to\SL$, which we can recover from the cocycle as $A(J)=T(1;J)$.

Occasionally, we will consider $T$ for a fixed $z\in\C$, and then it becomes an $\textrm{SL}(2,\C)$- or, if $z\in\R$,
$\textrm{SL}(2,\R)$-cocycle.

Of course, this is a rather highbrow view of the transfer matrix $T$;
often it is perfectly appropriate to just think of $T$ as the matrix that updates solution vectors, as follows:
if $u$ solves the difference equation $\tau u=zu$ and $Y(n)=(u_{n+1},-a_nu_n)^t$, then
$T(n)Y(0)=Y(n)$.

The \textit{Titchmarsh-Weyl $m$-functions }$m_{\pm}$, one for each half line $\Z_{\pm}$, are defined for
$z\in\C^+=\{z\in\C : \textrm{Im}\: z>0 \}$ by
\[
m_{\pm}(z) = \mp \frac{f_{\pm}(1,z)}{a_0 f_{\pm}(0,z)} ,
\]
where $f_{\pm}$ solve $\tau f=zf$ and $f_{\pm}\in\ell^2(\Z_{\pm})$. Notice that if we identify a vector $v\in\C^2$, $v\not=0$,
with the point $v_1/v_2$ on the Riemann sphere $\C_{\infty}=\C\cup \{\infty\}$,
then we can say that $m_{\pm}(z) = \pm F_{\pm}(0,z)$, where, as above,
$F(n)=(f_{n+1},-a_nf_n)^t$. In fact, this is the reason we defined $F$ and $T$ in this way.

This has the important consequence that $T$ is not just a cocycle of arbitrarily evolving matrices; rather, the cocycle
updates $m_{\pm}$ correctly along the action of $\Z$ in the sense that for $z\in\C^+$,
\begin{equation}
\label{2.71}
\pm m_{\pm}(z; n\cdot J) = T(n;J)(\pm m_{\pm}(z; J)) .
\end{equation}
Here, a matrix $A=\left( \begin{smallmatrix} a & b \\ c & d \end{smallmatrix} \right)$ acts on the Riemann sphere as a linear fractional
transformation: $Aw=\frac{aw+b}{cw+d}$.
Throughout this paper, an expression of this type (that is, a matrix followed by a number) will always refer to this action.
In other contexts, group actions will usually be indicated by the dot notation that we have already started employing above.

One more trivial but important point is worth making here: namely, the action of an $A\in\textrm{SL}(2,\C)$ on $\C_{\infty}$ is
consistent with its action on vectors $v\in\C^2$ in the sense that if we apply $A$ to such a $v\not=0$ to obtain $Av\in\C^2$, but then
change our mind and would rather let $A$ act as a linear fractional transformation on the point from $\C_{\infty}$ that $v$ represents,
then it suffices to also reinterpret $Av\in\C^2$ as a point from $\C_{\infty}$.

The Toda flows similarly come with associated cocycles that have the same basic properties.
Let me report quickly on this; the construction is discussed in detail in \cite[Section 12.4]{Teschl}.
Note, however, that this reference uses slightly different conventions (on what $T$ is, for example) and a completely
different perspective; in particular, cocycles are not mentioned in \cite{Teschl}.

Consider a fixed flow of the Toda hierarchy; here, we really want to think of this as an action of $G=\R$
on $\mathcal J$. Then there is a matrix function $B=B(z,J)$, entire in $z$ and continuous in $J$, and real on the real line,
with $\textrm{tr}\: B=0$, such that the following holds:
If $T=T(t;J)$ is defined as the solution of
\begin{equation}
\label{todacc}
\dot{T} = B(z, t\cdot J) T, \quad T(0)=1,
\end{equation}
then $T(t;J)$ is an $\SL$-cocycle for the action of $G=\R$ on $\mathcal J$, and again
\begin{equation}
\label{2.84}
\pm m_{\pm}(z; t\cdot J) = T(t;J)(\pm m_{\pm}(z; J)) .
\end{equation}
The fact that $T$ is a cocycle follows just from the form of \eqref{todacc}, and this would in fact work
for arbitrary $B$. Conversely, a differentiable cocycle for a given flow will satisfy \eqref{todacc}, with
$B(J)=(d/dt)T(t;J)\bigr|_{t=0}$. In other words, \eqref{todacc}, for general $B$, can be thought of as a rewriting of
the cocycle property, and then we claim that an appropriate choice of $B$ will also give \eqref{2.84}.

We have explicit formulae for these matrix functions $B$, which I will review in Section 5;
see especially \eqref{todaB} below. To just give one simple concrete
example for now, consider the classical Toda flow itself ($p(x)=x$ in \eqref{toda}): then
\begin{equation}
\label{Btoda}
B(J) = \begin{pmatrix} z-b_1 & 2 \\ -2a_0^2 & b_1-z \end{pmatrix} .
\end{equation}
While we will not provide a derivation of the general formula \eqref{todaB} (see \cite[Section 12.4]{Teschl} and
\cite[Theorem A.1]{OngR} for this),
we will indicate in
Section 3 how \eqref{Btoda} can be obtained in the framework proposed in this paper.

We then have such a matrix function $B=B_p$ for any choice of the polynomial $p$, and thus we obtain a cocycle
for every flow from the hierarchy, and of course if we consider just one fixed flow, then $G=\R$ is
the group that is acting on the Jacobi matrices.
Our next result says that we in fact obtain a cocycle $T$ with respect to the action of the whole
(and much larger) group $G=\mathcal P \times \Z$; recall that $\Z$ acts by shifts here, so $(p,n)\cdot J = p\cdot S^nJ= S^n(p\cdot J)$.
Let me make the definition of $T(g;J)\in \SL$ completely explicit: if $g=(p,n)$, then $T(g;J)=T(p; n\cdot J) T(n; J)$, where
$T(n;J)$ was defined in \eqref{shiftcc} and $T(p;J)$ is constructed as described above, by solving \eqref{todacc} for $B=B_p(tp\cdot J)$
and then evaluating at $t=1$. The alternative definition $T(g;J)=T(n; p\cdot J)T(p; J)$ would have worked, too, and in fact
this is part of what the theorem says. (Of course, contrary to what I promised, this is not really a complete definition of $T$ yet; we
also need the explicit formula \eqref{todaB} for $B$ from Section 5.)
\begin{Theorem}
\label{T1.1}
The matrices $T(g;J)$ form an $\SL$-cocycle for the action of $G=\mathcal P\times\Z$.
\end{Theorem}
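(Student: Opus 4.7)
The plan is to reduce the full group cocycle identity to two elementary building-block identities and verify each via the integrability structure of the Toda hierarchy.

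\textit{Step 1 (reduction).} Write $g_i=(p_i,n_i)$. Substituting the definition $T((p,n);J)=T(p;n\cdot J)\,T(n;J)$, using the commutativity of shifts with Toda flows on $\mathcal J$ (so $n\cdot p\cdot J=p\cdot n\cdot J$), and applying the known shift cocycle identity $T(n_1+n_2;J)=T(n_1;n_2\cdot J)\,T(n_2;J)$, a routine algebraic manipulation shows that \eqref{ccid} is equivalent to the two separate claims
\begin{align*}
\text{(A)}\quad & T(p+q;J)=T(p;q\cdot J)\,T(q;J), \quad p,q\in\mathcal P,\\
\text{(B)}\quad & T(p;n\cdot J)\,T(n;J)=T(n;p\cdot J)\,T(p;J), \quad p\in\mathcal P,\ n\in\Z.
\end{align*}

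\textit{Step 2 (claim (A)).} I would realize both sides of (A) as the value at $(s,t)=(1,1)$ of a single solution to a compatible PDE system on the two-parameter family $J_{s,t}:=sp\cdot tq\cdot J$, which is well defined because Toda flows commute. Consider
\[
\partial_s\widetilde T=B_p(J_{s,t})\,\widetilde T,\quad \partial_t\widetilde T=B_q(J_{s,t})\,\widetilde T,\quad \widetilde T(0,0)=1.
\]
The consistency condition is the zero-curvature equation
\[
\partial_tB_p-\partial_sB_q+[B_p,B_q]=0,
\]
which is the classical integrability condition of the Toda hierarchy and can be verified from the explicit formula for $B_p$ reviewed in Section~5. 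Given consistency, $\widetilde T(1,1;J)$ is path-independent: the two coordinate paths produce $T(p;q\cdot J)\,T(q;J)$ and $T(q;p\cdot J)\,T(p;J)$, while the diagonal $s=t$, using linearity of $B$ in the polynomial argument and $s(p+q)\cdot J=sp\cdot sq\cdot J$, yields $T(p+q;J)$. Equating gives (A).

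\textit{Step 3 (claim (B)).} I would treat the shift as a discrete flow and argue by uniqueness of solutions of a linear ODE. Set
\[
R(t;J):=T(tp;n\cdot J)\,T(n;J),\qquad R'(t;J):=T(n;tp\cdot J)\,T(tp;J).
\]
Both lie in $\SL$, both equal $T(n;J)$ at $t=0$, and $\partial_tR=B_p(n\cdot tp\cdot J)\,R$ by construction. The key step is to show $\partial_tR'=B_p(n\cdot tp\cdot J)\,R'$. One differentiates the product formula \eqref{shiftcc} for $T(n;tp\cdot J)$ in $t$ using the Toda evolution $\dot J=[p(J)_a,J]$; the cancellations (verified by direct computation for $n=1$ and then iterated) are the discrete analogue of the zero-curvature relation of Step~2 and can be summarized by
\[
\partial_tT(n;tp\cdot J)=B_p(n\cdot tp\cdot J)\,T(n;tp\cdot J)-T(n;tp\cdot J)\,B_p(tp\cdot J).
\]
Combined with $\partial_tT(tp;J)=B_p(tp\cdot J)\,T(tp;J)$, this yields the claimed ODE for $R'$, and uniqueness forces $R=R'$. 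Evaluation at $t=1$ gives (B).

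\textit{Main obstacle.} The technical heart of the proof is the verification of the zero-curvature compatibility in Step~2 and its discrete counterpart in Step~3. Both are classical manifestations of the integrability of the Toda hierarchy, but they require the explicit formula for $B_p$ (promised in Section~5 as \eqref{todaB}) and some algebraic bookkeeping involving the Lax-pair structure. A more conceptual alternative would be to route the argument through uniqueness of $\SL$-cocycles that update $m_\pm$ via \eqref{2.71} and \eqref{2.84}, but the relevant machinery (Theorem~\ref{T1.4}) is only developed later in the paper.
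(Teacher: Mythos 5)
Your Step 1 and Step 3 match the paper's structure: the flow--shift compatibility (your claim (B)) is exactly what the paper also handles via the discrete zero curvature equation \eqref{zctoda} (i.e.\ $\partial_t T(n;tp\cdot J)=B_p(n\cdot tp\cdot J)T(n;tp\cdot J)-T(n;tp\cdot J)B_p(tp\cdot J)$, iterated from the $n=1$ case), and the paper likewise does not reprove this but cites the standard literature. The problem is Step 2. There you reduce the flow--flow identity (A) to the matrix zero-curvature relation $\partial_t B_p-\partial_s B_q+[B_p,B_q]=0$ and then invoke flatness/path-independence; the path-independence mechanism is fine, but you dismiss the compatibility relation itself as ``the classical integrability condition \ldots verified from the explicit formula for $B_p$.'' That relation between two \emph{different} flows of the hierarchy is precisely the new content of Theorem \ref{T1.1} --- the paper stresses that the cocycle identity for a single flow combined with shifts is known, while the joint identity across distinct flows is the ``much stronger claim'' --- so asserting it without proof begs the question. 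Verifying it from \eqref{todaB} is a genuine computation (the entries of $B_p$ are built from the diagonal Green-function truncations $G,H$, whose time derivatives under the $q$-flow must be computed and matched against $[B_p,B_q]$), and nothing in your proposal carries it out; your ``Main obstacle'' paragraph concedes this, which means the heart of the theorem is missing.

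For comparison, the paper does not attack the $2\times 2$ zero-curvature identity head on. It conjugates $T(p;J)$ by the matrix $V(J)$ built from $m_\pm$ (see \eqref{5.31}), so that the cocycle becomes diagonal and the matrix identity (A) collapses to the scalar multiplicative identity \eqref{7.2} for $\lambda(p;J)$. The generator of the scalar cocycle is $\omega_p(J)=(G/g)_0$ from \eqref{defom}, and the whole problem then reduces to the cross-derivative symmetry \eqref{7.29}, $\frac{d}{dt}\omega_p(tq\cdot J)\bigr|_{t=0}=\frac{d}{dt}\omega_q(tp\cdot J)\bigr|_{t=0}$, which is proved by a concrete computation: the evolution equation \eqref{evolg} for the Green function, Laurent expansion of $g,h$ about $z=\infty$, and a coefficient comparison exhibiting the required symmetry in $(p,q)$. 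If you want to complete your route, you must either perform the analogous bookkeeping directly on the matrix identity $\partial_t B_p-\partial_s B_q=[B_q,B_p]$ (comparable in length to the paper's Proposition on \eqref{7.29}, but with three independent matrix entries instead of one scalar), or adopt the paper's diagonalization trick; as written, Step 2 is a reduction of the theorem to an unproven statement that is essentially equivalent to it.
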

This is known if we just act by either the shift or an individual Toda flow or a combination of these,
but the theorem makes the much stronger
claim that the cocycle identity \eqref{ccid} also holds if $g,h$ refer to different flows from the Toda hierarchy
(possibly combined with shifts also).

The existence of a \textit{joint }cocycle in this sense, even if the acting group is just $G=\R\times\Z$,
is such a strong condition that the Toda hierarchy can be recovered
from this property, plus the additional requirement that the matrices $B$ from \eqref{todacc} are polynomials in $z$.
This will be discussed in concrete style in Section 3.

From a more philosophical point of view, I believe the next result expresses the main reason why this structure
(a joint cocycle extending the shift cocycle) displays so much rigidity. It will be easy to prove, but I find it mildly surprising nevertheless.

We will give it in a rather general form and consider an action of an abelian group $G$ on $\mathcal J$, and our only assumption
on $G$ is that it contains an element of infinite order or, equivalently, a subgroup isomorphic to $\Z$. It will be notationally convenient
to just denote this subgroup as $\Z$ itself.

This subgroup $\Z$ will act by shifts, and
the application we have in mind would be to $G=\R\times\Z$, so in addition to the shift we have a flow that commutes with it.

We could in fact consider general, possibly non-abelian groups $G$, and we would then assume that $\Z\trianglelefteq G$ is contained
in $G$ as a \textit{normal }subgroup. Essentially the same proof as the one we are going to give still works, with the only modification
that $m_{\pm}$ could also get swapped as part of the updating done by the cocycle. For example, imagine a group element acting
by a reflection of the coefficients of $J$; in fact, the corresponding group action, which is generated by the shift and a reflection, provides probably the most
basic example of such an action of a non-abelian group with a cocycle that extends the shift cocycle. In this case, the acting group
is the infinite dihedral group.

We don't have any use for this non-abelian version of Theorem \ref{T1.4} in this paper, so I won't make it
explicit, but I do think that it suggests new directions and I intend to pursue these topics in a future project.
\begin{Theorem}
\label{T1.4}
Let $G$ be an abelian group with $\Z\le G$, and consider an action of $G$ on $\mathcal J$,
with $\Z$ acting by shifts $n\cdot J=S^nJ$. Fix $z\in\C^+$, and
suppose that $T(g;J)$ is an $\textrm{\rm SL}(2,\C)$-cocycle that extends the shift cocycle (for this fixed $z$)
in the sense that $T(n;J)$ is given by \eqref{shiftcc}.
Then, if $g \in G$, $J\in\mathcal J$ satisfy
\begin{equation}
\label{1.4a}
\liminf_{n\to\pm\infty} \|T(g; n\cdot J)\| < \infty ,
\end{equation}
then $\pm m_{\pm}(z, g\cdot J) = T(g;J)(\pm m_{\pm}(z,J))$.
\end{Theorem}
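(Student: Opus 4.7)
The plan is to exploit commutativity of $G$ together with the cocycle identity to rewrite $T(g;\,\cdot\,)$ in terms of the shift cocycle evaluated at a translated base point, and then to pin down the image $T(g;J)(\pm m_\pm(z,J))$ using the characterization of $\pm m_\pm$ as the unique direction on $\mathbb C_\infty$ producing the (subdominant) $\ell^2$ solution at $\pm\infty$.

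The starting point is the \emph{interlacing identity}: since $g\in G$ commutes with every $n\in\Z$, two applications of \eqref{ccid} in opposite orders give
\[
T(n;g\cdot J)\,T(g;J) \;=\; T(g+n;J) \;=\; T(g;n\cdot J)\,T(n;J).
\]
Concentrating on the $+$ case, set $V = (m_+(z,J),1)^t$ and $W = T(g;J)V$. By \eqref{2.71} the transfer matrix $T(n;J)$ takes the direction of $V$ to that of $(m_+(z,n\cdot J),1)^t$, so $T(n;J)V = \lambda_n(m_+(z,n\cdot J),1)^t$ for certain scalars $\lambda_n$. Since $V$ represents, up to normalization, the $\ell^2$ solution vector $Y(0)$ at $+\infty$, one has $\|T(n;J)V\|\to 0$ as $n\to+\infty$; and the uniform Herglotz bound $|m_+(z,J')|\le 1/\operatorname{Im}z$ (valid for every bounded Jacobi matrix $J'$ since the half-line spectral measure is a probability measure) shows that the norms $\|(m_+(z,n\cdot J),1)^t\|$ stay in a bounded interval depending only on $z$. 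It follows that $|\lambda_n|\to 0$.

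Now pick, via \eqref{1.4a}, a subsequence $n_k\to+\infty$ along which $\|T(g;n_k\cdot J)\|$ stays bounded. The interlacing identity applied to $V$, together with a trivial norm estimate, yields
\[
\|T(n_k;g\cdot J)\,W\| \;\le\; \|T(g;n_k\cdot J)\|\,|\lambda_{n_k}|\,\bigl\|(m_+(z,n_k\cdot J),1)^t\bigr\| \;\longrightarrow\; 0.
\]
At this point I would invoke the $\operatorname{SL}(2,\C)$ rigidity principle: preservation of the wedge product under unit-determinant matrices forces the $\ell^2$ direction at $+\infty$ for $g\cdot J$ to be the \emph{unique} direction on $\mathbb C_\infty$ with $\|T(n;g\cdot J)\,\cdot\,\|$ bounded along some subsequence $n\to+\infty$. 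Hence $W$ is proportional to that $\ell^2$ solution vector, so its ratio of components equals $m_+(z,g\cdot J)$; this is exactly $T(g;J)(m_+(z,J)) = m_+(z,g\cdot J)$. The $m_-$ case is entirely analogous, with $V_- = (-m_-(z,J),1)^t$ and $n\to-\infty$, using the other half of \eqref{1.4a}.

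The main obstacle is really only the final rigidity step, but it amounts to the elementary observation that in a cocycle of unit-determinant $2\times 2$ matrices one direction contracts to zero exactly when the other blows up; everything else is bookkeeping with the cocycle identity and standard bounds on Herglotz $m$ functions. The fact that \eqref{1.4a} is a $\liminf$ rather than a $\lim$ costs nothing here, since a single subsequence already pins down the direction of $W$.
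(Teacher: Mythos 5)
Your argument is correct and is essentially the paper's own proof: you use commutativity to interlace the cocycle as $T(n;g\cdot J)T(g;J)=T(g;n\cdot J)T(n;J)$, note that $T(n;J)(m_+,1)^t$ is the decaying $\ell^2$ solution vector, multiply by the subsequence on which $T(g;n\cdot J)$ stays bounded, and identify the result as the $\ell^2$ solution of $g\cdot J$ via the constant-Wronskian (unit-determinant) rigidity. The only difference is cosmetic bookkeeping (the scalars $\lambda_n$ and the Herglotz bound), which the paper's direct norm estimate makes unnecessary.
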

The additional assumption \eqref{1.4a} is aesthetically displeasing, but it seems necessary and
it should be extremely easy to verify in all cases of interest,
and let me perhaps also make a few comments on why this should be the case:
Typically, we expect $T(g; n\cdot J)$ to be bounded, uniformly in $n\in\Z$ and also in $z$, varying over a compact subset of $\C$,
and this should follow from the orbit $\{ n\cdot J \}$ having compact closure in $\mathcal J$. Then continuity of $T(g;J)$ in $J$
would be enough to produce the desired conclusion. This argument does not literally work in complete generality
because to make $\mathcal J_R$ compact,
we would have to include Jacobi matrices with $a_n=0$ in the definition of $\mathcal J$, which is in fact often convenient and I have
done this before on other occasions, but here it has the serious drawback that then the shift cocycle is undefined on those $J$.
The Toda cocycles, on the other hand, do have continuous extensions to Jacobi matrices with $a_n=0$
(and these sets are invariant under the action of $\mathcal P$).
Also, it would not be completely ridiculous to restrict the whole treatment to Jacobi matrices satisfying a uniform bound
$a_n\ge\delta >0$ (I did this in \cite{Remac}), and then these technical problems disappear entirely.

Also, note that since we're dealing with holomorphic functions,
in the case of $\SL$-cocycles (which is really all we're interested in here),
it suffices to establish the hypotheses of Theorem \ref{T1.4} for set of $z$'s with an accumulation point in $\C^+$ to have the statement
available for all $z\in\C^+$.

Let's now discuss the statement (rather than the assumptions) of Theorem \ref{T1.4}. I believe that several remarkable things
have happened here. Originally, it is natural to think of a cocycle $T(g;x)$ as something that evolves on the side, always keeping an eye
on the base dynamics $x\mapsto g\cdot x$. Now in the situation of Theorem \ref{T1.4}, it turns out that the cocycle
itself already tells the full
story: $T$ updates $\pm m_{\pm}$, but these determine $J$, so we recover the base dynamics from $T$. The \textit{zero curvature equation }\eqref{zctoda}
will make a very explicit statement about this.

Next, the mere fact that the updating is done by a matrix function $T(z)\in\SL$ is a very strong restriction, so only very special
group actions can ever have such an associated joint cocycle that extends the shift cocycle. Let's look at this in more detail.
I defined such maps,
\[
\pm m_{\pm}(z) \mapsto T(z) (\pm m_{\pm}(z)) ,
\]
earlier in \cite{Remgenrefl}, where I called them \textit{transformations of type }$\textrm{TM}$, which, by an unlikely coincidence,
is consistent with the name \textit{Toda maps, }which I would now like to give them; my original idea in \cite{Remgenrefl} was to
let $\textrm{TM}$ stand for \textit{transfer matrix. }Examples of Toda maps are given by the matrices from a Toda or shift cocycle,
but of course there are many others.

Toda maps are most naturally considered for general Herglotz functions, not necessarily coming from a Jacobi matrix;
equivalently, by the one-to-one correspondence between Herglotz functions and (trace normed) canonical
systems \cite{dB,Pota,Win}, we can think of $T\in\SL$ as inducing a map on whole line canonical systems $Ju'=-zHu$,
$J=\left( \begin{smallmatrix} 0 & -1\\ 1 & 0 \end{smallmatrix} \right)$. I'll use these two points of view
interchangeably; in particular, a canonical system (or just its coefficient function $H(x)$) in a situation where none was introduced
before will be understood to be associated with a given pair of Herglotz functions. For now, we don't really need to understand
anything about canonical systems; we can just view $H(x)$ as a convenient short-hand notation for a pair $m_{\pm}$ of Herglotz functions.

As a final general remark,
notice that Toda maps have domains, possibly quite small (or empty) ones, since there
is no guarantee that letting $T$ act on a Herglotz function will produce another Herglotz function.
\begin{Theorem}
\label{T1.2}
Let $T\in\SL$, and suppose that $m_{\pm}^{(j)}$ are related by the Toda map
\[
\pm m_{\pm}^{(2)}(z) = T(z) (\pm m_{\pm}^{(1)}(z)) .
\]
Then $H_1$ and $H_2$ are unitarily equivalent, and the reflection coefficients satisfy $|R_1(x)|=|R_2(x)|$ for almost every $x\in\R$.
In particular, for any Borel set $A\subseteq \R$, $H_1$ will be reflectionless on $A$ if and only if $H_2$ is.
\end{Theorem}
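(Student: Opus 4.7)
The plan is to extract both conclusions from a single elementary identity for $T=\bigl(\begin{smallmatrix} a & b \\ c & d \end{smallmatrix}\bigr)\in\SL$, namely
\[
T(w_1)-T(w_2)=\frac{w_1-w_2}{(cw_1+d)(cw_2+d)},
\]
combined with the reality relation $\overline{T(x)w}=T(x)\overline{w}$ for $x\in\R$ (which holds because $T(x)\in\textrm{SL}(2,\R)$). Set $u=m_+^{(1)}(x+i0)$ and $v=-m_-^{(1)}(x+i0)$, so the hypothesis reads $m_+^{(2)}(x+i0)=T(x)u$ and $m_-^{(2)}(x+i0)=-T(x)v$.

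For the reflection coefficient part, use the standard representation $R(x)=(m_++\overline{m_-})/(m_++m_-)$ at the boundary (so that $H$ is reflectionless on $A$ iff $R\equiv 0$ a.e.\ on $A$). Two applications of the identity give
\[
R_2=\frac{T(u)-T(\overline{v})}{T(u)-T(v)}=\frac{u-\overline{v}}{u-v}\cdot\frac{cv+d}{c\overline{v}+d}=R_1\cdot\frac{cv+d}{\overline{cv+d}},
\]
and the last factor has modulus $1$. Hence $|R_1(x)|=|R_2(x)|$ a.e., and the reflectionless property on any Borel set $A$ is preserved.

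For the unitary equivalence, I would work with the whole-line spectral matrix
\[
M(z)=\frac{1}{m_++m_-}\begin{pmatrix} m_+m_- & (m_--m_+)/2 \\ (m_--m_+)/2 & -1 \end{pmatrix},
\]
the $2\times 2$ matrix-valued Herglotz function whose associated matrix measure $d\Omega$ is the spectral measure of the operator $H$. A direct expansion using $\pm m_\pm^{(2)}=T(\pm m_\pm^{(1)})$ and $ad-bc=1$ (the cleanest step being $m_+^{(2)}+m_-^{(2)}=(m_+^{(1)}+m_-^{(1)})/[(cm_+^{(1)}+d)(-cm_-^{(1)}+d)]$) yields the transformation law
\[
M_2(z)=T(z)\,M_1(z)\,T(z)^{t}.
\]
At real boundary points $T(x)$ is a real invertible matrix, so the matrix spectral measures are related by the pointwise nondegenerate real change of basis $d\Omega_2=T\,d\Omega_1\,T^{t}$. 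Such a base change preserves the multiplicity and equivalence class of a matrix measure, and therefore $H_1$ and $H_2$ are unitarily equivalent.

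The main obstacle is the mechanical algebraic verification of the transformation law $M_2=TM_1T^{t}$; this is just bookkeeping once the entries of $M$ are written out, but it must be done carefully. The only conceptual input from outside is the standard fact in de Branges--Krein theory that unitary equivalence of canonical system operators is captured by the equivalence class of the matrix spectral measure modulo pointwise real linear changes of basis.
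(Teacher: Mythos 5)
Your overall strategy is correct, but it is genuinely different from the paper's, so let me compare. The reflection-coefficient part is essentially what the paper dismisses as ``immediate from a calculation'': your unimodular factor $(cv+d)/\overline{(cv+d)}$ is the same mechanism as the paper's hyperbolic-isometry argument, with the same implicit caveats (existence of boundary values, the degenerate cases where $cv+d=0$ or the boundary values are real, all of which must be absorbed into the ``almost every $x$''). For the unitary equivalence, however, you work with the matrix Herglotz function and the congruence law $M_2=TM_1T^t$, whereas the paper works with the scalar diagonal entries $g_0=-1/(m_++m_-)$, $g_1=m_+m_-/(m_++m_-)$, handles the a.c.\ parts through boundary values, compares the singular parts via Poltoratski's theorem with a case analysis in $c(t)$, and then needs the symmetry $T^{-1}\in\SL$ to upgrade $\rho_s\ll P_s$ to equivalence. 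Your congruence law is correct with your ordering of $M$, and the ``mechanical bookkeeping'' you worry about can be avoided entirely: with $u_\pm=(\pm m_\pm,1)^t$ and $J=\left(\begin{smallmatrix}0&-1\\1&0\end{smallmatrix}\right)$ one has $M=\frac{u_+u_-^t+u_-u_+^t}{2\,u_+^tJu_-}$, this expression is invariant under separate rescaling of $u_+$ and $u_-$, and $T^tJT=J$ because $\det T=1$; so $M_2=TM_1T^t$ drops out in one line. Since $T(x)$ is invertible for every real $x$, your route then gives both directions of the measure equivalence at once and avoids Poltoratski's theorem altogether.

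The one genuine gap is the passage from the identity $M_2(z)=T(z)M_1(z)T(z)^t$ on $\C^+$ to the measure identity $d\Omega_2=T\,d\Omega_1\,T^t$. ``At real boundary points $T(x)$ is a real invertible matrix'' is not an argument: the singular parts of $\Omega_j$ are not read off pointwise from boundary values of $M_j$, and this is precisely the part of the theorem where the paper has to bring in Poltoratski's theorem. What saves your approach is that $T$ is entire and real on $\R$, so the measure identity can be proved by Stieltjes inversion: for $x$ in a compact interval and $0<y\le1$ write $T(x+iy)=T(x)+iyT'(x)+O(y^2)$, use the local bound $\|M_1(x+iy)\|=O(1/y)$ and the fact that $y\,\mathrm{Re}\,M_1(x+iy)\to0$ pointwise with a locally uniform bound; since $T(x),T'(x)$ are real, all cross terms in $\frac1\pi\int_\alpha^\beta\mathrm{Im}\,M_2(x+iy)\,dx$ then vanish as $y\downarrow0$ (dominated convergence), leaving $\frac1\pi\int_\alpha^\beta T(x)\,\mathrm{Im}\,M_1(x+iy)\,T(x)^t\,dx\to\int_{(\alpha,\beta)}T\,d\Omega_1\,T^t$ for all intervals whose endpoints are not atoms, hence $d\Omega_2=T\,d\Omega_1\,T^t$ on all Borel sets. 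This lemma is exactly where the analyticity of $T$ across $\R$ (i.e.\ $T\in\SL$, not just some measurable family of boundary matrices) is used, so it should be stated and proved, not asserted. With it in place, your unitary $f\mapsto T^tf$ from $L^2(d\Omega_2)$ to $L^2(d\Omega_1)$ intertwines multiplication by the variable, and the proof is complete.
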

Some clarifying comments are in order:

(1) The general reflection coefficients, which are defined for any canonical system, not necessarily of classical scattering type,
are given by
\[
R_+(z) = \frac{\overline{m_+(z)} + m_-(z)}{m_+(z)+m_-(z)} , \quad\quad
R_-(z) = \frac{m_+(z) + \overline{m_-(z)}}{m_+(z)+m_-(z)} ;
\]
since $|R_+(x)|=|R_-(x)|$ at those $x\in\R$ at which both $R_{\pm}(x)\equiv \lim_{y\to 0+} R_{\pm}(x+iy)$ exist, it doesn't matter here
which reflection coefficient we take.
Please see \cite{Remgenrefl} for more on reflection coefficients and \cite{GNP,Ryb} for earlier uses of them in somewhat
different situations.

(2) We call $H$ \textit{reflectionless }on a Borel set $A\subseteq\R$ if
$m_+(x)=-\overline{m_-(x)}$ for almost every $x\in A$. This is equivalent to $R(x)=0$ almost everywhere on $A$, and thus the final
statement of Theorem \ref{T1.2} is an immediate consequence of the earlier claim that $|R_1(x)|=|R_2(x)|$.
Please see \cite{BRSim,HMRem,PR1,Remac} for (much) more on reflectionless operators and why they are important.

(3) Recall that for a whole line operator (think of specifically a Jacobi matrix or a Schr\"odinger operator perhaps), there is
a standard way to construct a specific spectral representation from the half line $m$ functions $m_{\pm}$. Namely, define the
matrix valued Herglotz function $M$ as
\begin{equation}
\label{defM}
M(z) = \frac{1}{m_+(z) + m_-(z)} \begin{pmatrix} -1 & \frac{1}{2}(m_+(z) - m_-(z)) \\
\frac{1}{2}(m_+(z)-m_-(z)) & m_+(z)m_-(z) \end{pmatrix} ;
\end{equation}
its Herglotz representation gives us a matrix valued measure $\rho$, and the operator is then unitarily equivalent to multiplication
by the variable in $L^2(\R , d\rho)$. See, for example, \cite[Section 2.5]{Teschl} for the Jacobi case.

In our general situation, we don't have an operator (construction of these is actually more
involved for canonical systems, and we don't really need them here), so we take this as our definition of the spectral properties.
In particular, the claim of Theorem \ref{T1.2} that $H_1$, $H_2$ are unitarily equivalent really means that the operators of multiplication
by the variable in $L^2(\R , d\rho_j)$ are unitarily equivalent.

We write $\mathcal R(A)$ for the set of Jacobi matrices that are reflectionless on $A$, and then,
for a non-empty compact set $K\subseteq\R$, we define
\begin{equation}
\label{defR0}
\mathcal R_0(K) = \{ J\in\mathcal J : J\in \mathcal R(K), \sigma(J)\subseteq K \} .
\end{equation}
We can make the same definitions for canonical systems, and we will write
$\mathcal R^C(A)$ and $\mathcal R^C_0(K)$ for these larger spaces.

$\mathcal R_0(K)$ is a compact subset of $\mathcal J$ if $K$ has positive Lebesgue measure;
if $K$ is essentially closed, then $\sigma(J)=K$ for all $J\in\mathcal R_0(K)$.
If $K$ is a finite gap set (a disjoint union of finitely many closed intervals of positive finite length), then $\mathcal R_0(K)$
is the set of finite gap Jacobi matrices with spectrum $K$ and, in particular, is homeomorphic to a torus of dimension equal to
the number of (bounded) gaps of $K$.

Theorem \ref{T1.2} implies that, for arbitrary compact $K$, the space $\mathcal R_0(K)$ is invariant under maps induced
by a $T(z)\in\SL$, provided that this maps Jacobi matrices to Jacobi matrices again.
In particular, the $\mathcal R_0(K)$ are invariant under all flows from the Toda hierarchy.
The collection of these sets is dense in $\mathcal J$: in fact, the Jacobi matrices with periodic coefficients
are already dense, and such a $J$ lies in $\mathcal R_0(K)$ with $K=\sigma(J)$.
Therefore, what happens to a general initial point under a Toda flow is determined by
what happens on the $\mathcal R_0(K)$. As we will see in a moment, this leads to a very easy and transparent explanation of the fact
that any two flows from the Toda hierarchy commute with each other.

It is also natural to ask what the fixed points of a Toda map are. In the special case of a Toda flow,
it is well known that these are exactly the $J\in\mathcal R_0(K)$ for a finite gap set $K$ (where $K$ depends on the flow under consideration).
The general case has essentially the same answer:
\begin{Theorem}
\label{T1.3}
Suppose that $\pm m_{\pm}(z) = T(z)(\pm m_{\pm}(z))$ for a $T\in\SL$, $T\not\equiv \pm 1$. Then $H\in\mathcal R^C_0(E)$,
and here $E$ is a union of disjoint closed intervals (possibly unbounded or consisting of single points)
whose endpoints do not accumulate anywhere. More specifically,
\[
E=\{ x\in\R: -2\le \textrm{\rm tr }T(x) \le 2\} .
\]
\end{Theorem}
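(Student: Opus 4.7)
My approach is to classify the Möbius action of $T(x)$ on $\C_{\infty}$ via the $\textrm{SL}(2,\R)$ trichotomy and then take boundary values of the fixed-point identity $T(z)m_+(z)=m_+(z)$ from $\C^+$ down to $\R$. The fixed points of $T(x)$ are the roots of a quadratic with discriminant $(\textrm{tr}\, T(x))^2-4$, so $T(x)$ is elliptic with a conjugate pair $w(x),\overline{w(x)}$, $w(x)\in\C^+$, precisely when $|\textrm{tr}\, T(x)|<2$; hyperbolic, with two distinct real fixed points, when $|\textrm{tr}\, T(x)|>2$; and parabolic or $\pm I$ when $|\textrm{tr}\, T(x)|=2$. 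For $z\in\C^+$, $m_+(z)\in\C^+$ and $-m_-(z)\in\C^-$ sit in opposite half planes, hence are distinct, and must therefore be the two fixed points of $T(z)$, which depend continuously (in fact algebraically) on the entries of $T(z)$.

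\textbf{Reflectionlessness on $E$.} For $x$ in the interior of $E$ with $|\textrm{tr}\, T(x)|<2$, continuity of the roots of the fixed-point quadratic forces $m_+(z)\to w(x)\in\C^+$ and $-m_-(z)\to\overline{w(x)}\in\C^-$ as $z\to x$ from $\C^+$, since no other assignment is compatible with the half-plane locations of $\pm m_\pm$. This yields $m_+(x)=-\overline{m_-(x)}$, exactly the reflectionless condition. The exceptional set $\{x\in E:|\textrm{tr}\, T(x)|=2\}$ consists of real zeros of the entire function $(\textrm{tr}\, T(z))^2-4$, and this function cannot be identically zero: otherwise each $T(z)$ would be $\pm I$ or parabolic, with at most one fixed point, contradicting the existence of the two distinct fixed points $m_+(z)\neq -m_-(z)$ in conjunction with the hypothesis $T\not\equiv\pm 1$. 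Hence this exceptional set is discrete, and reflectionlessness holds a.e.\ on $E$.

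\textbf{Spectrum contained in $E$.} For $x\in\R\setminus E$, the two fixed points of $T(x)$ are distinct real numbers, so the same continuity argument gives $m_\pm(x+i0)\in\R$ and, crucially, $m_+(x)+m_-(x)\neq 0$. Real boundary values of a Herglotz function on an open set force analytic continuation across it (Schwarz reflection), while the nonvanishing of $m_++m_-$ prevents the matrix-valued function $M$ from \eqref{defM} from developing poles on $\R\setminus E$. Hence $M$ extends analytically through $\R\setminus E$, so its Herglotz representation measure $\rho$ is supported in $E$, giving $\sigma(H)\subseteq E$.

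\textbf{Structure of $E$ and main obstacle.} Since $\textrm{tr}\, T$ is entire and $(\textrm{tr}\, T)^2-4\not\equiv 0$, the real set $\{x:|\textrm{tr}\, T(x)|=2\}$ is discrete, so $E$ decomposes as claimed into closed intervals and isolated points with non-accumulating endpoints. The main technical hurdle is the careful handover from the analytic identity on $\C^+$ to pointwise statements on $\R$: one must verify that the distinction between the two roots of the fixed-point quadratic is preserved under the limit $z\to x$, and that the isolated points where $m_\pm(x+i0)$ fails to exist or equals $\infty$ do not spoil the measure-theoretic conclusions. The half-plane property $m_+(\C^+)\subseteq\C^+$ is the key leverage that unambiguously identifies which root of the quadratic corresponds to $m_+$ and which to $-m_-$, and this is what allows the whole argument to proceed cleanly once the trichotomy is set up.
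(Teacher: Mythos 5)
Your proposal is correct and follows essentially the same route as the paper: both identify $m_+(z)$ and $-m_-(z)$ as the two distinct roots of the fixed-point quadratic $c(z)w^2+(d(z)-a(z))w-b(z)=0$, so that on $E=\{x:|\mathrm{tr}\,T(x)|\le 2\}$ the boundary values are complex conjugates (reflectionlessness), while off $E$ they are real and distinct, which leaves no room for spectrum there; the paper just makes this explicit through the formulas $g_0=-c/\sqrt{D^2-4}$, $g_1=b/\sqrt{D^2-4}$ and adds a finer eigenvalue analysis that the statement itself does not require. One small caution: the step you call ``Schwarz reflection'' works only because you have finite real limits at \emph{every} point of $\R\setminus E$ (a.e.\ real boundary values alone would not rule out singular spectrum), so the clean way to finish is via the support facts for Herglotz measures---absolutely continuous part where $\mathrm{Im}>0$, singular part where the boundary limit is infinite---applied to the diagonal entries of $M$, including a one-line check at the isolated points where one of $m_\pm(x+i0)=\infty$.
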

Such an $H$ automatically has empty singular continuous spectrum, the absolutely continuous spectrum is essentially
supported by $E$ and of multiplicity $2$ locally, and point spectrum is only possible at the isolated points of $E$, if any.
These statements follow from the fact that $H\in\mathcal R^C_0(E)$, if we use
general results about reflectionless Herglotz functions;
see \cite{PR1} for more on this. However, they are also very easy to verify directly here, and we will do this in Section 4,
when we prove Theorem \ref{T1.3}.

We can now give very transparent explanations of the well known basic properties of Toda flows:
By combining Theorem \ref{T1.4} with Theorem \ref{T1.2}, we see that $p\cdot J$ is unitarily equivalent to $J$
and the absolute value of the reflection coefficient is preserved. In particular, $p\cdot J\in\mathcal R(A)$ precisely if
$J\in\mathcal R(A)$ (a different and much more technical proof of this fact was earlier given in \cite{PR2}, and see
also \cite{Ryb} in this context). So the sets $\mathcal R_0(K)$ are invariant under the action of $G=\mathcal P\times\Z$.
The fixed points are finite gap
Jacobi matrices by Theorem \ref{T1.3}. Finally, by their construction (if the Toda hierarchy
is indeed constructed following the suggestions above and as outlined more explicitly in Section 3), Toda flows commute with the shift.

What is missing from this list of basic properties is the fact that any two Toda flows also commute with each other.
This would in principle follow from corresponding properties of the matrices $B=B(z,J)$ from the cocycle equations.
However, a much more intuitive and less technical explanation is also possible. This depends on the following easy general fact from
topological dynamics.
\begin{Lemma}
\label{L2.1}
Let $S$ be a homeomorphism on a compact metric space $X$, and suppose that $(X,S)$ is an equicontinuous
dynamical system with a dense orbit. Then any two continuous maps on $X$ that commute with $S$ also commute with
each other.
\end{Lemma}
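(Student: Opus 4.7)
The plan is to exploit the fact that an equicontinuous system with a dense orbit is, after passing to the obvious closure, a translation on a compact abelian group. Concretely, the uniform closure of $\{S^n\}$ produces a compact, abelian family $E$ of continuous self-maps acting transitively on $X$, and every continuous map commuting with $S$ turns out to lie in $E$. Since the elements of $E$ pairwise commute, the conclusion is immediate.

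First I would set $E = \overline{\{S^n : n\in\Z\}} \subseteq C(X,X)$ with the sup metric. Equicontinuity of $\{S^n\}$ together with compactness of $X$ gives (Arzel\`a--Ascoli) that $E$ is compact and that every element of $E$ is continuous. Next I would check that $E$ acts transitively on $X$: given $y\in X$, pick $n_k$ with $S^{n_k}x_0 \to y$ (density of the orbit of $x_0$) and pass to a uniformly convergent subsequence to obtain $T\in E$ with $T(x_0)=y$.

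The main step is then to show that any continuous $F$ with $FS=SF$ lies in $E$. Pick $T\in E$ with $T(x_0)=F(x_0)$. Since each $S^n$ commutes with $S$ and commutation with the fixed map $S$ is preserved under uniform limits, every element of $E$ commutes with $S$; in particular $TS=ST$. Therefore
\[
F(S^n x_0) = S^n F(x_0) = S^n T(x_0) = T(S^n x_0)
\]
for every $n\in\Z$, so $F$ and $T$ agree on the dense orbit of $x_0$, and by continuity $F=T\in E$.

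Finally, I would check that any two $T,U\in E$ commute. Writing $T=\lim S^{n_k}$, $U=\lim S^{m_k}$ uniformly, equicontinuity of $\{S^n\}$ lets me pass to the limit inside a composition to obtain $TU=\lim S^{n_k+m_k}=\lim S^{m_k+n_k}=UT$. Applied to continuous maps $F_1,F_2$ commuting with $S$, both lie in $E$, so $F_1F_2=F_2F_1$. The only real subtlety is the equicontinuity bookkeeping around uniform limits of compositions (one needs that $S^{n_k}(y_k)\to T(y)$ whenever $y_k\to y$, which is exactly where equicontinuity beyond mere continuity is used); everything else is a routine application of Arzel\`a--Ascoli together with the dense-orbit hypothesis.
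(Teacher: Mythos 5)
Your proof is correct, and it takes a somewhat different route from the paper's. The paper argues directly with sequences: it writes $y=\lim S^{k_j}x$, $Fx=\lim S^{m_j}x$, $Gx=\lim S^{n_j}x$ for the point $x$ with dense orbit and uses commutation with $S$ plus equicontinuity to identify $FGy$ and $GFy$ with the common limit $\lim S^{k_j+m_j+n_j}x$, never leaving the orbit of $x$ and never invoking any compactness of function spaces. You instead prove a structural statement: by Arzel\`a--Ascoli the uniform closure $E$ of $\{S^n\}$ is a compact family acting transitively on $X$, and any continuous map commuting with $S$ agrees with some element of $E$ on the dense orbit, hence belongs to $E$; commutativity of $E$ then finishes the argument. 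Your version is a bit heavier (it needs the Arzel\`a--Ascoli step to extract uniformly convergent subsequences, plus the observation that commutation with $S$ and the limit-exchange $S^{n_k}(y_k)\to T(y)$ survive under these limits, all of which you handle correctly, including the extension of $FS=SF$ to all powers $S^n$, $n\in\Z$), but it buys a genuinely stronger conclusion: the commutant of $S$ in $C(X,X)$ is exactly the enveloping group $\overline{\{S^n\}}$, which makes the commutativity conceptually transparent. The paper's argument is more elementary and shorter, but yields only the stated lemma.
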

The equicontinuity assumption refers to the family of maps $S^n, n\in\Z$.
\begin{proof}
Denote the two maps by $F$ and $G$, respectively.
Suppose that $\{ S^n x\}$ is dense in $X$, and let $y\in X$ be an arbitrary point. Then $y=\lim S^{k_j} x$
for a suitable sequence $k_j\in\Z$, and similarly $Fx = \lim S^{m_j}x$, $Gx=\lim S^{n_j}x$. Then
\[
Gy = G\lim S^{k_j} x = \lim GS^{k_j} x =
\lim_{j\to\infty} S^{k_j} \lim_{p\to\infty} S^{n_p} x ,
\]
and now the equicontinuity of $S^{k_j}$ implies that also $Gy = \lim S^{k_j+n_j}x$. By repeating this argument,
we find that $FGy = \lim S^{k_j+m_j+n_j}x$, and then that $GFy$ equals the same expression.
\end{proof}
\begin{Corollary}
\label{C2.1}
Any two Toda flows commute.
\end{Corollary}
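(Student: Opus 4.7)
The plan is to apply Lemma \ref{L2.1} on each isospectral torus in a suitably dense family of finite gap Jacobi matrices, and then pass to all of $\mathcal J$ by continuity.

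Let $F=p\cdot$ and $G=q\cdot$ be the time-one maps of two Toda flows, $p,q\in\mathcal P$. By Theorem \ref{T2.1} both $F$ and $G$ are continuous on every $\mathcal J_R$, and by construction they commute with the shift $S$ (this was already noted in the introductory discussion). Since $FG$ and $GF$ are then continuous on each $\mathcal J_R$, it is enough to prove $FGJ=GFJ$ on some dense subset $D\subseteq\mathcal J$.

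Take $D$ to be the union of $\mathcal R_0(K)$ over those finite gap sets $K$ for which the shift acts \emph{minimally} on $\mathcal R_0(K)$. Fix such a $K$ and set $X=\mathcal R_0(K)$; as recorded just after \eqref{defR0}, $X$ is compact and, by classical finite gap theory, a torus $\T^g$ on which $S$ acts as a translation. Minimality supplies a dense $S$-orbit, and since translations are isometries for a translation-invariant metric on $\T^g$, the family $\{S^n\}_{n\in\Z}$ is equicontinuous. The discussion following Theorem \ref{T1.3}, which combined Theorems \ref{T1.4} and \ref{T1.2}, showed that $F$ and $G$ preserve each $\mathcal R_0(K)$, and Theorem \ref{T2.1} gives continuity of their restrictions to $X$. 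Lemma \ref{L2.1} applied to $(X,S)$ with the two commuting continuous self-maps $F,G$ now delivers $FGJ=GFJ$ on $X$, hence on all of $D$.

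What remains is the density of $D$ in $\mathcal J$, and this is the only real obstacle. Periodic Jacobi matrices are dense in $\mathcal J$, and any periodic $J$ lies in $\mathcal R_0(K_0)$ for the finite gap set $K_0=\sigma(J)$. By the standard continuous dependence of the isospectral torus and of its shift translation vector $\omega(K)$ on the band endpoints of $K$, one can perturb $K_0$ slightly to a finite gap set $K$ whose components of $\omega(K)$ are rationally independent; then $S$ is minimal on $\mathcal R_0(K)$, and approximating $J$ within this torus in the $d$-metric places a point of $D$ arbitrarily close to $J$. Granting this standard finite gap fact, continuity of $FG$ and $GF$ extends the equality from $D$ to all of $\mathcal J$, completing the proof.
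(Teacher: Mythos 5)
Your argument is essentially the paper's own proof: apply Lemma \ref{L2.1} to $X=\mathcal R_0(K)$ for finite gap $K$, where the shift is an equicontinuous torus translation with a dense orbit for suitable frequencies, using Theorems \ref{T1.4} and \ref{T1.2} for invariance and Theorem \ref{T2.1} for continuity, and then pass to all of $\mathcal J$ by density of these tori. Your added detail on perturbing the band endpoints to get rationally independent frequencies just fills in what the paper leaves as ``$a$ has suitable properties,'' so the two proofs coincide in substance.
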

\begin{proof}[Sketch of proof]
We apply the Lemma to $X=\mathcal R_0(K)$, with $K$ being a finite gap set. Then $(X,S)$ is equicontinuous; in fact,
it is well known that $(X,S)$ can be conjugated to become a translation $\tau_a$ on a finite-dimensional torus
\cite[Section 9.1]{Teschl}. The additional
assumption of Lemma \ref{L2.1}, on dense orbits, will be satisfied if $a$ has suitable properties. The claim of the Corollary
now follows from Theorem \ref{T2.1} and the fact that the collection of these $\mathcal R_0(K)$ is still dense in $\mathcal J$.
\end{proof}
\section{Reconstruction of the Toda hierarchy from cocycles}
Before we turn to the topic announced in the title of this section, let me give the proofs of Theorems \ref{T2.1}, \ref{T1.4}.
\begin{proof}[Proof of Theorem \ref{T2.1}]
For a given $p\in\mathcal P$, write $X(J)=[p(J)_a,J]$ for the right-hand side of \eqref{toda}. This is Lipschitz continuous
with respect to the operator norm and thus the standard Picard iteration technique may be employed (see \cite[Section 12.2]{Teschl}
for more details): the solution $J(t)$ of \eqref{toda} on $0\le t\le T$
with the initial value $J$ may be obtained as $J(t)=\lim J_n(t)$, with
\begin{equation}
\label{pic}
J_0(t) = J, \quad J_{n+1}(t) = J + \int_0^t X(J_n(s))\, ds ,
\end{equation}
and the convergence is in operator norm, and it is uniform in $t\in [0,T]$. The length $T$ of this interval only depends on the Lipschitz constant.

As a by-product, one also obtains the continuity of $J\mapsto p\cdot J$ with respect to the operator norm from this method, but
we would like to use $d$ instead of $\|\cdot \|$, so it's now natural to wonder if we also have a Lipschitz condition with respect to $d$,
and indeed this works: we have that
\begin{equation}
\label{lip}
d(X(J_1),X(J_2)) \le L d(J_1, J_2)
\end{equation}
for some $L$ that only depends on the polynomial $p$ and the bound $R$ on $\|J\|$. To see this, observe that
the matrix $p(J)_a$ has finite band width
in the sense that $\langle \delta_j ,p(J)_a\delta_k\rangle =0$ for $|j-k|>\deg p$. So when we compute a matrix element
$X(J)_{nk}$, then only those coefficients $a_j,b_j$ of $J$ at at most a certain distance from $n$ or $k$ are involved, and of course
$X(J)_{nk}$ is a smooth function of those coefficients, with uniformly bounded derivatives on $J\in\mathcal J_R$.
From this it follows that there
are constants $C$ and $D$, depending only on $R$ and $p$, such that
\begin{equation}
\label{1.4b}
\left| X(J)_{nk} - X(J')_{nk} \right| \le C \sum_{|j-n|\le D} \left( |a_j-a'_j| + |b_j-b'_j| \right) ;
\end{equation}
also recall that only $k=n, n\pm 1$ gives non-zero matrix elements here. By multiplying \eqref{1.4b} by $2^{-|n|}$ and summing over $n,k$,
we now indeed obtain \eqref{lip}.

The rest is routine: the argument alluded to above can simply be repeated with \eqref{lip} as the key ingredient.
To spell this out more explicitly,
consider two initial values $J,J'$, and observe that the metric is of the form $d(J_1,J_2)=\|J_1-J_2\|_w$, for
a certain weighted $\ell^1$ norm $\|\cdot \|_w$. Thus
an inductive argument using \eqref{pic} shows that
\[
d(J_n(t), J'_n(t)) \le d(J,J') \left( 1 + Lt + \ldots + \frac{(Lt)^n}{n!} \right) ,
\]
so $d(J(t),J'(t))\le e^{Lt} d(J,J')$.
\end{proof}
\begin{proof}[Proof of Theorem \ref{T1.4}]
I'll discuss the claim about $m_+$; of course, a similar argument will work for $m_-$. Recall that $M=m_+(z)$
may be characterized as the unique number that makes (the components of) $T(n;J)(M,1)^t$ square summable over $n\ge 1$.
Moreover, a solution vector $F_n =(f_{n+1},-a_nf_n)^t$, with $\tau f=zf$, will be square summable already if $\liminf \|F_n\|=0$. This follows
because the Wronskian $F_n^tJG_n$, $J=\left( \begin{smallmatrix} 0 & -1 \\ 1 & 0 \end{smallmatrix} \right)$, of two solutions
$F,G$ is constant and the $\ell^2$ solution goes to zero, so all other solutions must become large.

Consider now the solution
\[
F_n = T(n;g\cdot J)T(g;J)\begin{pmatrix} M \\ 1 \end{pmatrix} = T(g; n\cdot J) T(n;J) \begin{pmatrix} M \\ 1 \end{pmatrix} .
\]
Since, by assumption, $T(g; n\cdot J)$ stays bounded on a subsequence $n_k\to\infty$, we see that $\liminf \|F_n\|=0$.
By our preliminary remarks, this identifies $F_n$ as the $\ell^2$ solution of the Jacobi matrix $g\cdot J$.
Hence its $m$ function $m(g\cdot J)$ is the number represented by $T(g;J)(M,1)^t$, but this is
$T(g;J)M$, as claimed.
\end{proof}

We now indicate how the Toda hierarchy could be constructed starting from the
requirement that we wish our evolutions to have an associated $\SL$-cocycle when combined with the shift,
rather than from the Lax equation \eqref{toda}, as is usually done.
We do not make rigorous claims in this part and will freely use formal calculations.

We consider a single flow, unknown at this point, which we'll denote by $t\cdot J$ (in our notation from above,
this would become $tp\cdot J$, for a fixed polynomial $p$ and $t\in\R$). So the acting group now is $G=\R\times\Z$,
with $n\cdot J$ acting by shifts and $t\cdot J$ is what we're trying to construct. More careful notation for these group
elements would have been $(0,n)$ and $(t,0)$, respectively, but our abbreviated version is more pleasing to look at,
and we in fact already used similar conventions earlier.

Following our general plan,
we now make the crucial (and strong) additional
assumption that there is an associated $\SL$-cocycle $T=T(t,n;J)$ whose shift part is given by \eqref{shiftcc}.
As in \eqref{todacc}, the $t$ part is described
by a matrix function $B=B(z,J)$, $\textrm{tr}\: B=0$. We
differentiate both sides of the identity
\begin{equation}
\label{3.1}
T(t;1\cdot J)T(1;J) = T(1;t\cdot J)T(t; J)
\end{equation}
with respect to $t$ at $t=0$. Obviously, \eqref{3.1} is a special case of the cocycle identity
\eqref{ccid}; however, it is also true and easily verified that if we have individual cocycles $T_0(t;J)=T(t,0;J)$ and $S(n;J)=T(0,n;J)$,
for the actions of $\R$ and $\Z$, respectively, and the compatibility condition \eqref{3.1} holds,
then $T(t,n;J):=T_0(t; n\cdot J) S(n;J)$ defines a cocycle for the action of $\R\times\Z$.

Next, recall that $T(1;J)=A(J)$, with the $A$ from \eqref{shiftcc}, so, formally at least, we obtain that
\begin{equation}
\label{zctoda}
B(1\cdot J)A(J) = \dot{A}(J)+A(J)B(J) ,
\end{equation}
where $\dot{A}(J)$ is short-hand for $(d/dt)A(t\cdot J)\bigr|_{t=0}$. This equation is known
as the \textit{zero curvature equation. }Our derivation of it here gives it a rather transparent interpretation:
It is the cocycle property for $T(g;J)$ in differential form. More precisely, it is a compatibility
condition that will ensure that the individual cocycles form a joint cocycle when glued together.
Please see also Section 3 and especially Theorem 3.1 of \cite{OngR}, where these remarks are
made more precise.

To obtain the Toda hierarchy from \eqref{zctoda}, we make the additional assumption that $B(z,J)=\sum_{n=0}^N z^n B_n(J)$
is a polynomial in $z$. Since $A(z,J)=A_0(J)+zA_1(J)$ also is of this type, we can then compare coefficients in \eqref{zctoda}.

Let me take a quick look at the cases $N=0$ and $N=1$, as an illustration, without attempting to give a general treatment.
Such a general treatment, however, is possible and was given by Ong \cite{OngToda}.

If $N=0$, then comparing coefficients of $z^1$ in \eqref{zctoda} shows that $B_{12}=B_{21}=0$. Since $\textrm{tr}\: B=0$,
this means that $B$ would have to be of the form $B(J)=\alpha(J) \left( \begin{smallmatrix} 1 & 0 \\ 0 & -1 \end{smallmatrix} \right)$,
but now we can already see that only $\alpha\equiv 0$ works here since this $B$ leads to
\[
T(t;J) = \begin{pmatrix} e^{\omega(t;J)} & 0 \\ 0 & e^{-\omega(t;J)} \end{pmatrix}, \quad \omega(t;J) \equiv \int_0^t \alpha(s\cdot J)\, ds ,
\]
so $T(t)m=e^{2\omega}m$, but half line Jacobi $m$ functions satisfy $m_+(z)=-1/z + O(z^{-2})$ for large $z$, so Theorem \ref{T1.4}
now shows that we must have $\omega=0$. The same conclusion could have been obtained by looking at \eqref{zctoda} more closely.
So we are not getting a non-trivial flow for $N=0$.

Moving on to the case $N=1$ then, we again start out by comparing coefficients for the highest power $z^2$. This shows that
$B_1(1\cdot J)A_1(J)-A_1(J)B_1(J)=0$, and since $A_1=(1/a_1)\left( \begin{smallmatrix} 1 & 0 \\ 0 & 0 \end{smallmatrix} \right)$,
it follows that
\[
B_1(J) = \alpha_1 \begin{pmatrix} 1 & 0 \\ 0 & -1 \end{pmatrix} ,
\]
and here $\alpha_1(1\cdot J)=\alpha_1(J)$. Since there are Jacobi matrices $J$ whose orbits under the shift map are dense, this forces
us to take $\alpha_1$ as a constant, independent of $J$, at least if we want a continuous $B_1(J)$. In fact, since multiplying $B$ by a constant
amounts to rescaling time, we might as well set $\alpha_1=1$.

By comparing the coefficients of $z$, we obtain that
\begin{equation}
\label{3.3}
\dot{A_1} = B_1 A_0(J) - A_0(J) B_1 + B_0(1\cdot J) A_1(J) - A_1(J) B_0(J) .
\end{equation}
Since only the $(1,1)$ entry of $A_1$ is non-zero, this also gives restrictions on the possible choices for $B_0$. More specifically,
we find that
\begin{equation}
\label{3.2}
B_0(J) = \begin{pmatrix} \alpha_0(J) & 2 \\ -2a_0^2 & -\alpha_0(J) \end{pmatrix} .
\end{equation}
Finally, we compare the coefficients of $z^0$; this gives that
\begin{equation}
\label{3.4}
\dot{A_0} = B_0(1\cdot J) A_0(J) - A_0(J) B_0(J) .
\end{equation}
We work out the matrix elements, using the $B_0$ from \eqref{3.2}. From the $(1,2)$ element, we find that
$\dot{a}_1/a_1= -2b_1-\alpha_0(J)-\alpha_0(1\cdot J)$. On the other hand, comparing the $(1,1)$ elements of \eqref{3.3} yields
$\dot{a}_1/a_1 = \alpha_0(J)-\alpha_0(1\cdot J)$. So we must take $\alpha_0(J)=-b_1$. With these choices in place,
\eqref{3.4} will now produce the familiar classical Toda equations
\[
\dot{a}_n/a_n= b_{n+1}-b_n , \quad \dot{b}_n=2(a_n^2-a_{n-1}^2) ,
\]
initially for $n=1$, but then we also obtain the general case by considering $n\cdot J$ instead of $J$.
Moreover, and more importantly still for us perhaps, we have confirmed that
$B(J)$ for the Toda flow is indeed given by \eqref{Btoda}.

As I mentioned above,
Ong \cite{OngToda} has carried out this whole analysis in a systematic fashion, and he proves that the whole Toda hierarchy
can be reconstructed in this way.
\section{Proof of Theorems \ref{T1.2}, \ref{T1.3}}
\begin{proof}[Proof of Theorem \ref{T1.2}]
I'll use the following notations in this proof: $T=\left( \begin{smallmatrix} a & b \\ c & d \end{smallmatrix} \right)$,
and I'll write $m_{\pm}$ and $M_{\pm}$ for $m_{\pm}^{(1)}$ and $m_{\pm}^{(2)}$, respectively, and similarly
for other quantities, to be introduced in a moment. Let $h=m_++m_-$, $g_0=-1/h$, $g_1=m_+m_-/h$. Notice
that $g_0,g_1$ are the diagonal elements of $M$ from \eqref{defM}, so $\rho=\rho_0+\rho_1$ may be used as
a spectral measure, where $\rho_j$ denotes the measure associated with the Herglotz function $g_j$.

We thus want to show that $\rho$ and $P$ (``capital rho;'' recall our general convention of using uppercase letters
for the transformed quantities) are equivalent measures, and we also
need to pay attention to possible spectral multiplicity, but this latter part will be easy since only the absolutely continuous
part can have multiplicity greater than one.

By writing out the linear fractional transformation by which $T$ acts, we obtain that
\begin{equation}
\label{2.21}
M_+ = \frac{am_++b}{cm_++d}, \quad M_- = - \frac{am_--b}{cm_--d} ,
\end{equation}
and then that
\begin{equation}
\label{2.23}
G_0 = -(cm_++d)(cm_--d)g_0, \quad G_1 = -(am_+ +b)(am_- -b)g_0 .
\end{equation}

We'll treat the different parts of the spectrum separately, and we start with the absolutely continuous parts.
These are easy to analyze:
The corresponding half line spectral measures are given by
$d\rho_{\pm , \textrm{\rm ac}}(t)=(1/\pi)\textrm{Im}\: m_{\pm}(t)\, dt$, and we may look at the
operator of multiplication by the variable in $L^2(\R, d\rho_{+,ac})\oplus L^2(\R, d\rho_{-,ac})$; the fact that such
a direct reduction to the two half lines works is sometimes referred to as the \textit{decomposition method.}
Since $a,b,c,d$ are entire
functions, \eqref{2.21} will imply, after a quick calculation, that each of the two sets
\[
\Sigma_{ac}(m_{\pm}) := \{ t\in\R : \textrm{\rm Im}\: m_{\pm}(t)>0 \}
\]
differs from its counterpart for $M_{\pm}$ by at most a set of Lebesgue measure zero. This proves the claim about
the absolutely continuous parts, including multiplicity.

Since only the absolutely continuous part can have multiplicity greater than one, our discussion of the singular parts
can focus on showing that $\rho_s$ and $P_s$ are equivalent measures. Now $\rho_{0,s}$ is supported
by the set where $|g_0(t)|\equiv \lim_{y\to 0+}|g_0(t+iy)|=\infty$. Since $m_{\pm}$
are Herglotz functions, this is equivalent to the three conditions
\begin{equation}
\label{2.22}
\textrm{\rm Im}\: m_{\pm}(t) = 0, \quad \lim_{y\to 0+}\textrm{\rm Re}\: (m_+(t+iy) + m_-(t+iy))=0 .
\end{equation}
Now we are going to use Poltoratski's Theorem \cite{Polt} (see also \cite{JL}) on the comparison of the singular parts. This says that for
$\rho_{0,s}$-almost every $t$, the limit
\[
\lim_{y\to 0+} \frac{G_0(t+iy)}{g_0(t+iy)} \equiv L(t)
\]
exists, and $L(t)=dP_0(t)/d\rho_{0,s}$, the Radon-Nikodym derivative of that part of $P_0$ (or $P_{0,s}$) that is absolutely continuous
with respect to $\rho_{0,s}$.

I now want to know how frequently $L$ can be zero, to control how much of the measure $\rho_{0,s}$ could potentially
be lost after the transformation.
We may then restrict our attention to those $t$'s that satisfy \eqref{2.22} to compute $L$ because $\rho_{0,s}$ gives zero weight
to the set where these conditions fail.
We now distinguish two cases:

(a) $c(t)=0$: Then $c(t+iy)=O(y)$, and a Herglotz function $F$ always satisfies $\lim_{y\to 0+} y\textrm{Re}\: F(t+iy)=0$. Thus
also $\lim c(t+iy)m_+(t+iy)= 0$, since we are currently only considering
$t$'s with $\textrm{\rm Im}\: m_+(t)=0$. Hence $L(t)=d^2(t)$ and $c,d$ cannot both be zero since
$\det T=1$, so we never have $L=0$ in this case.

(b) $c(t)\not= 0$: Recall that we are currently considering only $t$'s satisfying \eqref{2.22}. Now if also $L(t)=0$ for such a $t$,
then $m_{\pm}(t)\equiv\lim_{y\to 0+} m_{\pm}(t+iy)$ both exist and
$m_-(t)=-m_+(t)=d(t)/c(t)$. It follows that
\[
\frac{G_1(t+iy)}{g_0(t+iy)} = -(am_++b)(am_--b) \to \left( -\frac{a(t)d(t)}{c(t)} + b(t) \right)^2 = \frac{1}{c^2(t)} .
\]
Thus Poltoratski's theorem shows that if $L(t)=0$ on a set of positive $\rho_{0,s}$-measure, then
$dP_{1,s}/d\rho_{0,s}>0$ for $\rho_{0,s}$-almost every such $t$.

Putting things together, we thus see that
$\rho_{0,s} \ll \left( P_0 + P_1\right)_s$; indeed, the argument we gave showed that when the
density $L(t)$ in the decomposition $dP_{0,s} = Ld\rho_{0,s}+d\nu$ fails to be positive, then $P_{1,s}$ comes to the rescue.

A similar analysis, with $-1/m_{\pm}$ now taking over the roles of $m_{\pm}$,
works for $\rho_{1,s}$, and thus it also follows that $\rho_s\ll P_s$.
By symmetry, since $T^{-1}\in\SL$ also, this then gives that $P_s\ll \rho_s$ as well, so $\rho_s$ and $P_s$ are
equivalent measures. This concludes the proof of the claim that $H_1,H_2$ are unitarily equivalent.

It remains to show that $|R_1|=|R_2|$ almost everywhere, but this is immediate from a calculation. Or, in flashier style,
one can observe that
\[
|R(z)| = \left| \frac{m_+(z)+\overline{m_-(z)}}{m_+(z)+m_-(z)}\right| = \tanh \left( \frac{1}{2} \gamma(m_+(z),-\overline{m_-(z)})\right)
\]
depends only on the hyperbolic distance $\gamma$ of the numbers $m_+(z)$, $-\overline{m_-(z)}\in \C^+$, which is preserved by the
automorphism $T(x)\in \textrm{\rm SL}(2,\R)$. Strictly speaking, this argument works only if $m_+(x),-\overline{m_-(x)}$
actually lie in $\C^+$, but if at least one of them is real, then $|R(x)|=1$, and we have the same situation for the transformed
operator by the unitary equivalence of the absolutely continuous parts, which we already proved.
\end{proof}
\begin{proof}[Proof of Theorem \ref{T1.3}]
As in the previous proof, we'll write $T=\left( \begin{smallmatrix} a & b \\ c & d \end{smallmatrix} \right)$.
Then, by writing out the assumption that $\pm m_{\pm} = T(\pm m_{\pm})$, we see that $m_+(z)$ and $-m_-(z)$ both solve the
quadratic equation
\begin{equation}
\label{4.1}
c(z)x^2+(d(z)-a(z))x-b(z) = 0 .
\end{equation}
Its solutions are
\[
x = \frac{a-d}{2c} \pm \frac{1}{2c}\sqrt{D^2 - 4}, \quad D\equiv a+d ,
\]
at least if $c(z)\not=0$. Observe that we cannot have $c\equiv 0$ here: if $z\in\C^+$, then $m_+(z)\not= -m_-(z)$, so \eqref{4.1}
must have two distinct solutions, and if $c=0$, then this forces $a=d= \pm 1$, $b=0$, so $T=\pm 1$, but we explicitly assumed
that $T$ is not identically equal to the identity matrix or its negative.

Recall from the previous proof how we can extract the spectral properties of the whole line operator from $m_{\pm}$: we let
$h=m_++m_-$, $g_0=-1/h$, $g_1=m_+m_-/h$, and then $\rho=\rho_0+\rho_1$ is a spectral measure of
maximal type, where the measures $\rho_0, \rho_1$ come from the Herglotz functions $g_0,g_1$.

In fact, this is needed only to study the singular part; the absolutely continuous part can be read off directly
by identifying the sets where $\textrm{Im}\:m_{\pm}(t)>0$. Here this happens
for $t\in\R$ precisely if $|D(t)|<2$, so we see that the absolutely continuous spectrum is
of (uniform) multiplicity $2$ and is essentially supported by
\[
\Sigma_{ac}=\sigma_{ac}=E = \{ t\in\R: |D(t)| \le 2 \} .
\]
We also obtain that $m_+(t)=-\overline{m_-(t)}$ on this set, so $H\in\mathcal R^C(E)$, as claimed.

To complete the proof of the theorem, we would just have to show that there is no spectrum outside $E$;
we will instead, as promised, give a complete treatment of the spectral properties, since this is easy to do.

It is already clear, from what we reviewed above, that there is no singular continuous spectrum;
it remains to prove that only the isolated points of $E$ are possible eigenvalues of $H$ . A calculation shows that
\begin{equation}
\label{4.3}
g_0 = -\frac{c}{\sqrt{D^2-4}}, \quad g_1 = \frac{b}{\sqrt{D^2-4}} ,
\end{equation}
and eigenvalues occur when
\begin{equation}
\label{4.2}
\lim_{y\to 0+} -iyg_j(t+iy) > 0
\end{equation}
for at least one of $j=0,1$. This can happen only when $D(t)=\pm 2$, and let's consider the case where $D(t)=2$
and \eqref{4.2} holds for $j=0$ at this $t$. Use the Taylor expansions
\[
c(t+iy) = c_k(iy)^k + O(y^{k+1}), \quad D(t+iy) = 2 + D_m(iy)^m + O(y^{m+1}) ,
\]
with $c_k,D_m\not= 0$ in \eqref{4.3}. It follows that $m=2n\ge 2$ must be even, and since $c_k,D_m\in\R$, we also see that $D_m>0$,
so indeed
\[
D(t+h)=2+D_m h^{2n} + O(h^{2n+1}) > 2
\]
for $x=t+h\in\R$ close to $t$, so $t$ is an isolated point of $E$, as claimed.
The other cases are similar.
\end{proof}
\section{Proof of Theorem \ref{T1.1}}
Before we can prove Theorem \ref{T1.1}, we of course need to give the precise definition of the cocycle:
The corresponding matrix function $B$ is given by
\begin{equation}
\label{todaB}
B(J) = \begin{pmatrix} 2(z-b_1)G_1 - H_1 & 2G_1 \\ -2a_0^2G_0 & -2(z-b_1)G_1 + H_1 \end{pmatrix} .
\end{equation}
This formula needs some explanation.
Also, recall that we have one such $B=B_p$ for each polynomial $p\in\mathcal P$, which, as usual,
is then used to produce $T(p;J)$ by solving
\[
\dot{T}=B_p(tp\cdot J)T, \quad T(0)=1 ,
\]
and evaluating at $t=1$.

For a given polynomial $p\in\mathcal P$,
the functions $G,H$ from \eqref{todaB} depend on $J\in\mathcal J$, $n\in\Z$, and $z\in\C$, and they
are polynomials in $z$; the indices in \eqref{todaB} refer to the $n$ variable, so for example $G_1=G(n=1)$.
The precise definitions are as follows: Suppose first of all that $p(x)=x^k$, and attach a superscript $k$ to the
corresponding functions $G,H$, as a reminder to ourselves that this is the $p$ we have currently chosen. Then
\begin{align*}
G^{(k)}(z,J) & = \sum_{j=0}^{k-1} z^{k-1-j} (J^j)_d , \\
H^{(k)}(z,J) & = z^k - (J^k)_d + 2a \sum_{j=1}^{k-1} z^{k-1-j} (SJ^j)_d ;
\end{align*}
here $X_d$ refers to the diagonal part of a (let's say: bounded) operator, thought of as an infinite matrix. In other words,
$(X_d)_n = \langle \delta_n, X \delta_n\rangle$. As before, $S$ denotes the shift,
so $(Sx)_n=x_{n+1}$ for $x\in\ell^2$, and $a$ refers
to the sequence $(a_n)$. So the dependence on $n$ has not been made explicit
in the notation; it is contained in the various diagonal parts and also in $a=a_n$.

We have now defined the functions $G,H$ for $p(x)=x^k$;
for general $p(x)=\sum c_k x^k$, we put $G=G_p=\sum c_k G^{(k)}$, and similarly for $H=H_p$. In other words,
we make $G,H$ linear functions of $p$.

If $p(x)=x$, then we obtain that $G=1$, $H=z-J_d=z-b$, and thus we recover the $B$
from \eqref{Btoda} from \eqref{todaB}.

We are now ready for the
\begin{proof}[Proof of Theorem \ref{T1.1}]
I will only discuss the cocycle identity for evolution along two distinct flows
from the Toda hierarchy. The theorem also claims that the same property holds if we consider a flow together with the shift.
This follows from the zero curvature equation, and the relevant calculations (though not the statement itself, at least not
very explicitly) can be found in the standard literature,
for example in \cite[Section 12.2]{Teschl} or \cite[Section 1.3]{GeHol2}. The argument is presented in detail
in the appendix of \cite{OngR}.

So we'll show that
\begin{equation}
\label{5.32}
T(p+q;J)=T(p; q\cdot J)T(q; J)\quad \textrm{for }p,q\in\mathcal P .
\end{equation}
Rather than deal with this $\SL$-cocycle directly,
we can reduce matters to the same question about a $\C^{\times}$ valued cocycle, as follows. Introduce,
for fixed $z\in\C^+$,
\[
V(J) = \frac{1}{\sqrt{m_++m_-}} \begin{pmatrix} m_+ & - m_- \\ 1 & 1 \end{pmatrix} ;
\]
the choice of square root can be conveniently settled by requiring that $\sqrt{m_++m_-}\in\C^+$.
Next, write
\begin{equation}
\label{5.31}
T(p;J) = V(p\cdot J) D(p;J) V(J)^{-1}, \quad D = \begin{pmatrix} \lambda(p;J) & 0 \\ 0 & \lambda(p;J)^{-1} \end{pmatrix} .
\end{equation}
(This kind of transformation is usually described by referring to $T$ and $D$ as \textit{cohomologous }cocycles; note, however,
that $T\in\SL$ while $V$ is not an entire function of $z$. So the transformation, while convenient here, certainly has its drawbacks
from a general point of view.) To interpret \eqref{5.31}, let's first of all observe that $V$ as a linear fractional transformation
maps the standard unit vectors $e_1$, $e_2$ (that is, $\infty$ and $0$, as points on the Riemann sphere) to $\pm m_{\pm}$. This means that
\eqref{5.31} simply records the general form of an $\textrm{SL}(2,\C)$ matrix that updates $\pm m_{\pm}(J)$ correctly to
their new values $\pm m_{\pm}(p\cdot J)$. Next, the cocycle property \eqref{5.32} for $T$ is now equivalent to the same
property for $D$, and since $D$ is diagonal, \eqref{5.32} indeed simplifies to
\begin{equation}
\label{7.2}
\lambda(p+q;J) = \lambda(p; q\cdot J)\lambda(q; J) ,
\end{equation}
and this is what we'll now establish.

Since we already have the cocycle property of $D$ and $\lambda$ with respect to the action of $G=\R$, for
just a fixed individual flow, we know that, in analogy to \eqref{todacc}, we can obtain $\lambda$ by solving an ODE
of the form
\begin{equation}
\label{lambdacc}
\dot{\lambda} = \omega_p(tp\cdot J) \lambda, \quad \lambda(0)=1 ;
\end{equation}
as usual, $\lambda(p;J)$ then is the solution to this initial value problem, evaluated at $t=1$.

We can find $\omega=\omega_p(J)$ by comparing this with \eqref{todacc}, \eqref{todaB}. To do this,
differentiate $T=T(tp;J)$ with respect to $t$ at $t=0$ to produce $B_p(J)$. Obviously, these manipulations
assume that the various evolving quantities depend smoothly on $t$; a few quick remarks about this issue
can be found below, at the beginning of the proof of Proposition \ref{P7.1}.

If the representation from
the right-hand side of \eqref{5.31} is used in this calculation, then we obtain an alternative formula that relates $B$ to
$\omega_p(J)=(d/dt)\lambda(tp; J)\bigr|_{t=0}$. It is especially convenient to focus on the $(2,1)$ entry
of $B$ because the only contribution to this when differentiating \eqref{5.31} comes from
$V(J)\dot{D}V(J)^{-1}$. In this way, we see that
\begin{equation}
\label{defom}
\omega_p(J) = -a_0^2 (m_++m_-) G_0 = (G/g)_0 ,
\end{equation}
where $g_0 = -1/(a_0^2(m_++m_-))$. This is also the matrix element
of the resolvent
\begin{equation}
\label{defg}
g_0(z)=\langle \delta_0, (J-z)^{-1} \delta_0\rangle ,
\end{equation}
which we already used
in Section 4 (with the factor $a_0^2$ removed). So alternatively, we could define $g = (J-z)^{-1}_d$ and
then interpret $g_0$ exactly as above as the $n=0$ element of this sequence.

The choice of notation here was deliberate: the polynomial $G$ is a truncated version of the Taylor series of $g$
about $z=\infty$, plus a trivial shift of exponent. So the two functions $g,G$ are closely related. There is an analog $h$ of $g$,
which bears the same relation to $H$ as $g$ does to $G$ and will become important later. It is defined by
\begin{equation}
\label{defh}
h(z) = \left( 2aS(J-z)^{-1} -1 \right)_d ;
\end{equation}
here, $S$ again denotes the shift operator $(Sx)_n=x_{n+1}$, and $a$ must now be interpreted
as the operator of multiplication by the sequence $a_n$.

We will now establish:
\begin{Proposition}
\label{P7.1}
For fixed $z\in\C^+$ and
any two polynomials $p,q\in\mathcal P$, $\omega_p(tq\cdot J)$ is a smooth function of $t\in\R$ and
\begin{equation}
\label{7.29}
\frac{d}{dt} \omega_p(tq\cdot J)\bigr|_{t=0} = \frac{d}{dt} \omega_q (tp\cdot J)\bigr|_{t=0} .
\end{equation}
\end{Proposition}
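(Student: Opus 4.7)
The plan is to establish both the smoothness claim and the symmetry identity \eqref{7.29} by direct computation, working from a useful reformulation of $\omega_p$: the polynomial factorization $z^kI-J^k = (zI-J)\sum_{j=0}^{k-1}z^{k-1-j}J^j$ gives $G_p(z,J)|_{n=0} = (p(J)R)_{00} - p(z)g_0(z,J)$ with $R := (J-z)^{-1}$, hence
\[
\omega_p(J) = -p(z) + \frac{(p(J)R)_{00}}{g_0(z,J)}.
\]
Smoothness in $t$ then follows: the orbit $t\mapsto tq\cdot J$ is real-analytic in $t$ by Picard iteration on the Lax ODE (exactly as in the proof of Theorem \ref{T2.1}), while both $(p(J)R)_{00}$ and $g_0(z,J)$ depend smoothly -- in fact holomorphically -- on $J$ for fixed $z\in\C^+$, with $g_0$ nonzero there.

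Next I would compute the directional derivatives explicitly from $\partial_q J = [q(J)_a, J]$. Using $[J, R] = [p(J), R] = 0$ (polynomials in $J$ commute with $R$), one obtains $\partial_q p(J) = [q(J)_a, p(J)]$ and $\partial_q R = -R[q(J)_a, J]R$, which after the simplification $RJ = JR$ becomes $-[Rq(J)_aR, J]$. Therefore
\begin{align*}
\partial_q g_0 &= -[Rq(J)_aR, J]_{00}, \\
\partial_q(p(J)R)_{00} &= ([q(J)_a, p(J)]R)_{00} - [p(J)Rq(J)_aR, J]_{00},
\end{align*}
where the last line uses the analogous rearrangement $p(J)R[q(J)_a, J]R = [p(J)Rq(J)_aR, J]$. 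Since $-p(z)$ is $J$-independent, combining these gives an explicit formula for $\partial_q\omega_p$ in terms of matrix elements of $p(J)$, $q(J)_a$, and $R$ at sites near $0$; the formula for $\partial_p\omega_q$ is the obvious $p\leftrightarrow q$ swap.

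The main obstacle is the algebraic cancellation in $\partial_q\omega_p - \partial_p\omega_q$. After clearing the common denominator $g_0^2$, the difference splits into three pieces, each manifestly antisymmetric under $(p,q)$ swap: a commutator piece $(g_0([q(J)_a, p(J)] - [p(J)_a, q(J)])R)_{00}$, which via $[p(J), q(J)] = 0$ rewrites as $(g_0([p(J)_s, q(J)] - [q(J)_s, p(J)])R)_{00}$ with $p(J)_s := p(J) - p(J)_a$; a cross piece $(p(J)R)_{00}[Rq(J)_aR, J]_{00} - (q(J)R)_{00}[Rp(J)_aR, J]_{00}$; and $g_0([q(J)Rp(J)_aR, J] - [p(J)Rq(J)_aR, J])_{00}$. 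The cleanest route I can see is to assemble the total as the $(0,0)$ entry of a single commutator $[X(p,q), J]$ with $X$ manifestly antisymmetric in $(p,q)$ swap, then exhibit cancellation by exploiting the symmetry $R^T = R$, the antisymmetry of the $(\cdot)_a$ operation, and $[p(J), q(J)] = 0$ in tandem. This final algebraic simplification -- recognizing why the three antisymmetric pieces conspire to annihilate at site $0$ -- is where I expect essentially all of the technical difficulty to lie.
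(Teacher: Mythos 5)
Your setup is sound: the reformulation $\omega_p(J) = -p(z) + (p(J)R)_{00}/g_0$ with $R=(J-z)^{-1}$ is correct (it follows from $\sum_{j=0}^{k-1}z^{k-1-j}J^j = R(J^k-z^k)$ and linearity in $p$), the smoothness argument is adequate (and comparable in detail to what the paper itself offers), and the directional derivatives $\partial_q p(J)=[q(J)_a,p(J)]$, $\partial_q R=-[Rq(J)_aR,J]$ are computed correctly from the Lax equation. But the proof stops exactly where the content of the proposition lies. Having written $g_0^2(\partial_q\omega_p-\partial_p\omega_q)$ as a sum of three pieces, you observe that each is ``manifestly antisymmetric under $(p,q)$ swap'' --- which is vacuous, since the whole difference is antisymmetric by construction; antisymmetry gives no information about vanishing. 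The actual cancellation, which you yourself defer (``this is where I expect essentially all of the technical difficulty to lie''), is never exhibited, and it is not a routine simplification: it requires a genuine identity tying the matrix elements of $p(J)_a$, $q(J)_a$ and the resolvent at site $0$ together, and it is not clear that your proposed route via a single commutator $[X(p,q),J]_{00}$, $R^t=R$, and $[p(J),q(J)]=0$ closes without substantial further input.

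For comparison, the paper carries this step out concretely: it first reduces to monomials $p(x)=x^p$, $q(x)=x^q$ (using linearity of $\omega$ in the polynomial and the abelian group structure), then uses the known evolution $\dot g = 2g(\omega_q h - H^{(q)})$ of the diagonal Green function (with the auxiliary function $h$ of \eqref{defh}), writes $G^{(p)}=[-z^pg]_+$ as a truncated Laurent series, deduces $\dot G^{(p)} = [-2z^p(G^{(q)}h-H^{(q)}g)]_+$, and finally verifies the required symmetry of
\[
F_1(p,q) = \left[ H^{(q)}G^{(p)} - z^p\bigl(G^{(q)}h - H^{(q)}g\bigr)\right]_+
\]
by an explicit comparison of Laurent coefficients $c_n$, $d_n$. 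That coefficient bookkeeping is the analogue of the cancellation you postponed. To complete your argument you would need either to carry out the commutator-level identity you sketch (verifying it, e.g., by expanding matrix elements of $[Y,J]_{00}$ and the triangular truncations explicitly), or to switch at this point to an expansion in powers of $z$ as the paper does; as it stands, the proposal is an incomplete reduction, not a proof.
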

Before proving this, let's discuss how we can obtain \eqref{7.2} (and thus finish the proof of Theorem \ref{T1.1}) from
Proposition \ref{P7.1}. By applying this result to $(sp+tq)\cdot J$ in place of $J$, we also obtain that
\begin{equation}
\label{7.4}
\frac{\partial}{\partial s} \omega_q((sp+tq)\cdot J) = \frac{\partial}{\partial t} \omega_p ((sp+tq)\cdot J) ,
\end{equation}
and then integration with respect to $s$ from $s=0$ to $s=t$ yields
\begin{equation}
\label{7.4c}
\omega_q(t(p+q)\cdot J) - \omega_q(tq\cdot J) = \int_0^t \frac{\partial}{\partial t} \omega_p ((sp+tq)\cdot J)\, ds .
\end{equation}
We also have that $\omega_{p+q}=\omega_p+\omega_q$. To see this, recall how $\omega$ was defined in \eqref{defom}:
only $G_0$ depends on $p$ here, and we observed earlier that $G$ indeed is a linear function of $p$. So we can rewrite
\eqref{7.4c} as
\begin{align}
\label{7.5}
\omega_{p+q}(t(p+q)\cdot J) = &\:\: \omega_p(t(p+q)\cdot J) + \omega_q(tq\cdot J) \\
\nonumber
& + \int_0^t \frac{\partial}{\partial t} \omega_p ((sp+tq)\cdot J)\, ds .
\end{align}
Next, integrate this from $t=0$ to $t=1$ and use Fubini-Tonelli on the right-hand side, which is justified
since $(sq+tp)\cdot J$ stays inside a compact set for $s,t\in [0,1]$,
and this implies that the integrand is bounded. We do need further information about
the evolution under Toda flows for this step, which can be found in \cite{Teschl}; see also the discussion below,
at the beginning of the proof of Proposition \ref{P7.1}. Integration of the last term of \eqref{7.5} then produces
\begin{gather*}
\int_0^1 ds\int_s^1 dt\, \frac{\partial}{\partial t} \omega_p ((sp+tq)\cdot J) = \\
\int_0^1 \left( \omega_p(sp\cdot (q\cdot J))
- \omega_p(s(p+q)\cdot J)\right) \, ds
\end{gather*}
and here the last term conveniently cancels the integral of the first term from the right-hand side of \eqref{7.5}. So if we also recall
\eqref{lambdacc} and exponentiate the integrated version of \eqref{7.5}, then we indeed arrive at \eqref{7.2}, as desired. This
completes the proof of Theorem \ref{T1.1}, assuming Proposition \ref{P7.1}.
\end{proof}

\begin{proof}[Proof of Proposition \ref{P7.1}]
As for the smoothness claims, we essentially refer the reader to \cite{Teschl} and limit ourselves to a few remarks.
See especially Theorem 12.6 there, but also Lemma 12.15. As a general strategy, once the smoothness of (the coefficients of)
$tp\cdot J$ is known, everything else (such as the smoothness of $m_{\pm}(tp\cdot J)$, $G_0(tp\cdot J)$) will pretty much just
fall into place by making use of the explicit formulae we have; the perhaps most challenging contributions here are $m_{\pm}$,
but these, too, can be handled without much trouble by recalling that they are evolved by the cocycle and then using the explicit
formulae for $B$. I'll leave the matter at that, and I'll focus on \eqref{7.29} now.

First of all, observe that it suffices to prove this for monomials $p(x)=x^p$, $q(x)=x^q$. Indeed, suppose we had this already,
and consider general polynomials $p=\sum p_jx^j$, $q=\sum q_j x^j$. Then
\begin{gather*}
\omega_p(tq\cdot J) = F(t,t, \ldots, t), \\
F(t_0, \ldots , t_N) \equiv \omega_p(t_0 q_0 \cdot t_1 q_1 x \cdot \ldots \cdot t_Nq_Nx^N \cdot J) ,
\end{gather*}
and since the group acting is abelian, the monomials $t_jq_jx^j$ can be reshuffled at will here. Thus, by the chain rule,
\[
\frac{d}{dt} \omega_p(tq\cdot J)\bigr|_{t=0} = \sum \frac{\partial F}{\partial t_j}(0,0, \ldots , 0) = \sum \frac{d}{dt}
\omega_p (tq_j x^j \cdot J) \bigr|_{t=0} .
\]
Moreover, $\omega_p = \sum p_j \omega_{x^j}$, so indeed \eqref{7.29} for monomials gives the general case also.

So it now suffices to take
$p(x)=x^p$, $q(x)=x^q$, and let's also assume that $p>q$. We'll write $G^{(p)}, G^{(q)}, H^{(p)}, H^{(q)}$ to indicate which polynomial
(or, rather, monomial) is being used, and we now adopt the convention that evaluation at $n=0$ is understood if no such index is given:
for example $G^{(p)}$ will refer to $(G^{(p)})_0$.

We will also need the time evolution of $g=g(tq\cdot J)$, which is given by
\begin{equation}
\label{evolg}
\dot{g} = 2g (\omega_q h - H^{(q)}) ;
\end{equation}
here, as always, the dot notation is defined as $\dot{X}(J) = (d/dt)X(tq\cdot J)\bigr|_{t=0}$, and $h$ was defined in \eqref{defh}.
To prove \eqref{evolg}, it is easiest to work with the explicit kernel of the Green function $g$; please see \cite[Section 12.4]{Teschl}
for the details.

It now follows that
\begin{equation}
\label{7.81}
\frac{d}{dt} \omega_p(tq\cdot J) \bigr|_{t=0} = \frac{1}{g} \left( \dot{G}^{(p)} - 2\omega_q h G^{(p)} + 2H^{(q)} G^{(p)} \right) .
\end{equation}
To analyze this further, we expand $g,h$ about $z=\infty$:
\begin{equation}
\label{expgh}
g(z) = -\sum_{n=0}^{\infty} c_n z^{-n-1}, \quad h(z) = - \sum_{n=-1}^{\infty} d_n z^{-n-1} ,
\end{equation}
and here
\begin{gather*}
c_0=1, \: c_n = \langle \delta_0, J^n \delta_0\rangle \quad (n\ge 1); \\
d_{-1}=1, \: d_0=0, \: d_n=\langle \delta_0, 2aSJ^n \delta_0 \rangle \quad (n\ge 1) .
\end{gather*}
These formulae follow by expanding the definitions \eqref{defg}, \eqref{defh}, and the series converge at least
for $|z|>R$, with $R=\|J\|$. Note that this quantity does not change under any Toda flow because the evolved operator
is unitarily equivalent to $J$.

It is again clear from \cite[Theorem 12.6]{Teschl} that $c_n ,d_n\in C^{\infty}$ as a function of $t$ along any flow. Moreover,
we also obtain uniform bounds of the form $\dot{c_n}, \dot{d_n}\lesssim R^n$, so the expansions from
\eqref{expgh} may be differentiated with respect to $t$ term by term, and thus $\dot{g},\dot{h}$ have expansions similar
to the ones from \eqref{expgh}. This will become important in a moment because we will establish the desired identity
by expanding and then comparing coefficients. Note that strictly speaking this will only give the identity for $|z|>R$, but of course
that is good enough since both sides are holomorphic functions of $z$.

As I already pointed out, the functions $g,h$ are closely related to $G,H$, and this can now be made more explicit: we have that
\[
G^{(D)}(z) = z^D \sum_{n=0}^{D-1} c_n z^{-n-1} , \quad H^{(D)}(z) = z^D \sum_{n=-1}^{D-1} d_n z^{-n-1} - c_D, \quad D=p,q .
\]
In other words, we have that (for example)
\[
G^{(p)}(z) = \left[ -z^pg(z) \right]_+ ,
\]
where the notation $[X]_+$ instructs us to expand $X=\sum_{n\in\mathbb Z} X_nz^n $ into a Laurent series
about $z_0=0$ and then keep only the power series part $[X]_+=\sum_{n\ge 0}X_nz^n$ (which, in this paper,
will always be a polynomial).
We now use these formulae to compute the time derivatives. So, recalling \eqref{defom} and \eqref{evolg}, we can now say that
\begin{equation}
\label{7.4d}
\dot{G}^{(p)} = \left[ -2z^p (G^{(q)} h- H^{(q)} g) \right]_+ .
\end{equation}
In this step, we make use of our preliminary observations about term-by-term differentiation of \eqref{expgh}.
If \eqref{7.4d} is combined with \eqref{7.81}, then what we are trying to show can be rephrased as the claim that with
\[
F(p,q)\equiv \left[ -z^p (G^{(q)} h- H^{(q)} g) \right]_+ - \omega_q h G^{(p)} + H^{(q)} G^{(p)} ,
\]
we have the identity $F(p,q)=F(q,p)$.

This symmetry property is clear for the second term if we recall \eqref{defom}, so we may drop this from $F$ and focus on
\[
F_1(p,q) \equiv\left[ H^{(q)} G^{(p)} -z^p (G^{(q)} h- H^{(q)} g) \right]_+ .
\]
We can now finish the argument by a straightforward (if tedious) brute force calculation.
We expand everything in powers of $z$. After a calculation, we find that

\begin{multline*}
F_1(p,q) = \left[ z^{p+q} \left( \sum_{j=0}^{q-1} c_j z^{-j-1}\sum_{k\ge -1} d_k z^{-k-1} \right. \right. \\
\left. \left.
- \sum_{j\ge p} c_j z^{-j-1}\sum_{k= -1}^{q-1} d_k z^{-k-1}\right) \right]_+ .
\end{multline*}
In the last sum, we may also sum over \textit{all }$k\ge -1$ since we will not get contributions to the $[\ldots ]_+$ part from
the $k\ge q$, and if written in this form, then $F_1$ is easily seen to have the symmetry property we require.
\end{proof}

\section{Twisted shifts and flows for canonical systems}
As pointed out earlier, though not exceedingly popular at the moment, \textit{canonical systems}
\begin{equation}
\label{can}
Ju'(x) = -zH(x)u(x) , \quad x\in\R ,
\end{equation}
are a very natural object from a mathematical point of view because when we normalize them by
the requirement that $\textrm{tr}\: H = 1$, then they are in one-to-one correspondence to pairs
of Herglotz functions $m_{\pm}: \C^+\to\overline{\C^+}$. Here $J$ denotes the matrix
$J=\left( \begin{smallmatrix} 0 & -1 \\ 1 & 0 \end{smallmatrix} \right)$, and the basic assumptions
on the coefficient function $H$ are
as follows: it takes values in $\R^{2\times 2}$,
$H(x)\ge 0$ almost everywhere, and $H\in L^1_{\textrm {loc}}$ (this latter requirement follows
automatically if the entries are measurable and we do normalize the trace). The half line $m$ functions are
defined as $m_{\pm}(z) = \pm f_{\pm}(0,z)$, $z\in\C^+$, where $f_{\pm}$ solves \eqref{can} and
$f_{\pm}\in L^2_H(\R_{\pm})$, that is, $\int_0^{\infty} f^*_+(x)H(x)f_+(x)\, dx<\infty$, and similarly for $f_-$.
Here and throughout this section,
we assume limit point case at both endpoints $\pm \infty$: there is a unique, up to a multiplicative constant,
square integrable solution at each endpoint. If $\textrm{tr}\: H=1$, or, more generally, $\textrm{tr}\: H\notin L^1(\R_{\pm})$,
then this follows automatically, by a Theorem
of de~Branges, see \cite{dB} and also \cite{AchaLPC}. Finally, when we write $m_{\pm}(z)=\pm f_{\pm}(0,z)$ we have again
used the convention that a non-zero vector $v\in\C^2$ is identified with the point $v_1/v_2$ on the Riemann sphere.

Since any Herglotz function is the $m$ function of a (unique, when trace-normed) canonical system (on a half line),
it must in particular be possible to rewrite Jacobi and Schr\"odinger equations
\begin{equation}
\label{se}
-y''(x) + V(x)y(x) = z y(x)
\end{equation}
as canonical systems. This can be done explicitly, and these transformations are well known. Let me discuss the
case of a Schr\"odinger equation \eqref{se}, with potential $V\in L^1_{\textrm {loc}}(\R)$,
and in the limit point case at $\pm\infty$. Given a solution $y$, let $Y=(y',y)^t$, and observe that $Y$ then solves
\[
Y' = \begin{pmatrix} 0 & V-z \\ 1 & 0 \end{pmatrix} Y .
\]
Let $T_0(x)\in\textrm{\rm SL}(2,\R)$ be the matrix solution of this equation
for $z=0$ and with the initial value $T_0(0)=1$. Write
\[
T_0(x) = \begin{pmatrix} p'(x) & q'(x) \\ p(x) & q(x) \end{pmatrix}.
\]
Finally, introduce $u(x)$ by writing $Y=T_0u$. This is essentially variation of constants for \eqref{se} about $z=0$,
which seems a reasonable thing to try since we must get rid of the terms not involving $z$ if we want to write
\eqref{se} as a canonical system. A calculation then shows that: (1) $u$ indeed solves \eqref{can}, with
\begin{equation}
\label{canse}
H(x)= \begin{pmatrix} p^2(x) & p(x)q(x) \\ p(x)q(x) & q^2(x) \end{pmatrix} ;
\end{equation}
(2) $y\in L^2(0,\infty)$ precisely if $u\in L^2_H(0,\infty)$, and since the $m$ function of \eqref{se}
with Dirichlet boundary conditions $y(0)=0$ is given by $m_S(z)=F(0,z)=f'(0,z)/f(0,z)$,
with $f\in L^2(0,\infty)$, this in particular shows that, as intended, $m_S=m_C$, that is, the $m$ functions of the
Schr\"odinger equation and the canonical system agree. The same remarks apply to
the left half line $(-\infty, 0)$.

Note that $H$ from \eqref{canse} will usually not be trace normed; we could pass to a trace normed version of this (or any) $H$
by the change of variable $t=\int_0^x \textrm{tr}\: H(s)\, ds$, but we prefer not to do so here. One reason for this
is that such a normalization is inconvenient here because it will typically not be preserved by the flows we are about to construct.

The classical hierarchy of evolution equations of Schr\"odinger operators is the KdV hierarchy. This can be constructed in
the traditional way from a Lax equation, or, more in line with what we do here,
in the same way as discussed in Section 3 for the Toda hierarchy: $\R$ acts by shifts $(s\cdot V)(x) = V(x+s)$
(it is now $\R$ rather than $\Z$ because we have moved from the discrete to the continuous setting),
and we may now look for new flows (= actions of $\R$) that come with associated $\SL$-cocycles for the
action of $G=\R\times\R$, where, as above, the extra copy of $\R$ acts by shifts. There are definitely extra
technical difficulties involved, compared to the Jacobi case, since we cannot expect global flows on very general
initial conditions, but leaving that aside, one finds that at least the formal side of this works just like before.

Moving on to canonical systems then, this would suggest to just try the same thing here, but I will instead
propose a modified and more general approach. I will not look for flows that commute with literally the shift $H(x)\mapsto H(x+s)$,
but rather with a modified version of this. One strong immediate motivation for this comes from the observation that
a shift $V(x)\mapsto V(x+s)$ will not induce just a shift on the corresponding $H$ from \eqref{canse}; in fact, since always
$H(0)=\left( \begin{smallmatrix} 0 & 0 \\ 0 & 1 \end{smallmatrix} \right)$ that clearly could not be true. Rather, the shift on $V$ induces
the action
\begin{equation}
\label{5.1}
(s\cdot H)(x) = T_0(s)^{-1t}H(x+s)T_0(s)^{-1}
\end{equation}
on $H$. This can be checked by hand, by verifying that the right-hand side is still \eqref{canse}, but with $T_0(x)$ replaced by
$T_0(x+s)T_0(s)^{-1}$ (this is the right way to do it because we need the solution that is the identity matrix at $x=0$), or,
alternatively, it could be checked that $m_{\pm}$ get updated correctly, though this second argument would require a uniqueness
result also.

Now we take this as our guideline for suitable flows on general canonical systems that might take over the role of the shift.
Motivated by \eqref{5.1}, I propose to consider $\R$ actions of the form
\begin{equation}
\label{5.2}
(s\cdot H)(x) = M(s;H)^tH(x+s)M(s;H) ,
\end{equation}
for functions $M:\R\times \mathcal C\to \textrm{SL}(2,\R)$ (where I denoted the set of coefficient functions of
limit point case canonical systems by $\mathcal C$) satisfying the following condition.
\begin{Proposition}
\label{P5.1}
Suppose that $T:=M^{-1}$ satisfies the cocycle identity
\[
T(s+t; H) = T(s; t\cdot H) T(t; H) .
\]
Then \eqref{5.2} defines a flow on $\mathcal C$.
\end{Proposition}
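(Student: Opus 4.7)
The plan is to verify the three requirements packaged into the phrase ``defines a flow on $\mathcal C$'': (i) $0\cdot H = H$, (ii) the composition law $(s+t)\cdot H = s\cdot (t\cdot H)$, and (iii) invariance of $\mathcal C$ under the proposed action, i.e.\ $s\cdot H \in \mathcal C$ whenever $H\in\mathcal C$.

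For (i), I would plug $s=t=0$ into the cocycle identity to get $T(0;H) = T(0;H)^{2}$; since $T$ is $\textrm{SL}(2,\mathbb R)$-valued it is invertible, so $T(0;H) = I$, hence $M(0;H) = I$, and therefore $(0\cdot H)(x)=H(x)$. For (ii), inverting the cocycle identity for $T=M^{-1}$ gives $M(s+t;H) = M(t;H)\,M(s;t\cdot H)$, and then a direct computation yields
\begin{align*}
s\cdot(t\cdot H)(x) &= M(s;t\cdot H)^{t}\,(t\cdot H)(x+s)\,M(s;t\cdot H)\\
&= \bigl[M(t;H)M(s;t\cdot H)\bigr]^{t}\,H(x+s+t)\,\bigl[M(t;H)M(s;t\cdot H)\bigr]\\
&= M(s+t;H)^{t}\,H(x+s+t)\,M(s+t;H) = (s+t)\cdot H(x).
\end{align*}
So this step is purely algebraic and is precisely where the cocycle identity for $T$ enters.

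The remaining point (iii) is the one that requires a small amount of care, and it is where I expect whatever modest obstacle there is to lie. Real-valuedness, pointwise positive semidefiniteness, and $L^{1}_{\textrm{loc}}$ are preserved under the proposed action because $M(s;H)\in \textrm{SL}(2,\mathbb R)$ is a constant-in-$x$ real invertible matrix, and the shift $x\mapsto x+s$ plainly preserves all three properties of $H$. For the limit point condition, I would use the identity $JM^{-1}=M^{t}J$, valid for every $M\in \textrm{SL}(2,\mathbb R)$, to check that $v(x)$ solves $Jv' = -z(s\cdot H)(x)v$ near $\pm\infty$ if and only if $u(y):=Mv(y-s)$ solves $Ju' = -zH(y)u$ near $\pm\infty$. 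The same change of variables $y = x+s$ gives $\int v^{*}(s\cdot H)v\,dx = \int u^{*}Hu\,dy$, so the correspondence $v\leftrightarrow u$ identifies $L^{2}_{s\cdot H}$ with $L^{2}_{H}$. Hence uniqueness (up to scalar) of a square-integrable solution at each endpoint transfers from $H$ to $s\cdot H$, and we conclude $s\cdot H\in\mathcal C$.
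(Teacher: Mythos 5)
Your items (ii) and (iii) are correct and are exactly the ``direct calculation'' the paper alludes to (its proof consists of the single remark that the statement is trivial): inverting the hypothesis to $M(s+t;H)=M(t;H)\,M(s;t\cdot H)$ and substituting into \eqref{5.2} gives the composition law, and your verification that $\mathcal C$ is preserved (congruence by a constant $\textrm{SL}(2,\R)$ matrix preserves symmetry, positivity and $L^1_{\textrm{loc}}$, and the solution correspondence via $JM^{-1}=M^tJ$ together with the change of variables identifies $L^2_{s\cdot H}$ with $L^2_H$, so the limit point condition transfers) is sound and in fact more careful than anything the paper records.

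The one step that does not work as written is (i). Plugging $s=t=0$ into the cocycle identity gives $T(0;H)=T(0;0\cdot H)\,T(0;H)$, hence $T(0;0\cdot H)=I$ --- not $T(0;H)=T(0;H)^2$, because the argument of the first factor is $0\cdot H$, and $0\cdot H=H$ is precisely what you are trying to prove; your reduction silently assumes it. More generally, setting $s=0$ only yields $T(0;t\cdot H)=I$ for all $t$ and $H$, i.e.\ $M(0;\cdot)=I$ on images of the maps $H\mapsto t\cdot H$, and from the displayed identity alone one cannot conclude $M(0;H)=I$ on all of $\mathcal C$: one can construct $M$'s satisfying the hypothesis for which $0\cdot$ is a nontrivial idempotent rather than the identity (take $M(t;H)$ independent of $t$, equal to a fixed $A\ne I$ on a shift-invariant set that $A$-congruence moves off itself, and $I$ elsewhere). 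The fix is harmless: either include the normalization $T(0;H)=1$ in the hypothesis --- it is automatic in the paper's setting, where $T$ is produced by solving \eqref{5.4} with $T(0)=1$ --- or read ``flow'' as the composition law together with that normalization. With $T(0;H)=1$ granted, your (i), (ii), (iii) combine as intended, and bijectivity of each $s\cdot$ follows from $s\cdot\circ(-s)\cdot=0\cdot=\mathrm{id}$.
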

\begin{proof}
This is trivial; check it by direct calculation.
\end{proof}
We will call such an action of $G=\R$ on $\mathcal C$ a \textit{twisted shift.}

As we discussed earlier, (differentiable) cocycles correspond to matrix functions $S=S(H)$, $\textrm{tr}\: S=0$
(we used to denote these by $B$, but I now want to call them $S$ as in \textit{shift}): given such an $S$, the cocycle is
then obtained from
\begin{equation}
\label{5.4}
\dot{T} = S(s\cdot H) T, \quad T(0)=1 .
\end{equation}
Moreover, once we have the cocycle $T$, we then obtain a flow from \eqref{5.2}. However, notice that
this does not really give an explicit construction of group actions of $\R$, after having chosen a matrix function $S$
because in order to be able to solve \eqref{5.4}, we need to have that action already. In fact, a random choice of $S$ seems
quite unlikely to produce a group action via \eqref{5.2}, \eqref{5.4}, for reasons that will be much clearer in the discrete
setting, so we postpone the more detailed discussion until the end of this paper. Please see \eqref{ts1}, \eqref{ts2} below
and the comments that follow.

This somewhat circular structure can be camouflaged if we pass to a differential formulation. (However, note that it becomes
truly circular only if we start out at the wrong end, with a matrix function $S$; if we are given $M=M(s;H)$ instead, then this
matrix function satisfies the condition of Proposition \ref{P5.1} or it doesn't, and when it does, then we do obtain an action
and a cocycle. There is no circularity here because if $M$ is given, then \eqref{5.2} gives a perfectly meaningful definition
of $s\cdot H$, though, despite the notation, this map might fail to define a group action.)
To do this, assume that everything on the right-hand side of \eqref{5.2} is differentiable and take the $s$ derivative at $s=0$.
Since $(d/ds)M = -T^{-1}(dT/ds)T^{-1}$ and $T(0)=1$, $(dT/ds)(0)=S(H)$, this yields
\begin{equation}
\label{pdets}
\frac{\partial H}{\partial s} - \frac{\partial H}{\partial x} = -S^t(H)H - HS(H) ,
\end{equation}
where $H=H(x,s)=(s\cdot H)(x)$. More precisely, we initially obtain this equation for $s=0$, but then the general case, at $s=s_0$, say,
results by applying this to $(s\cdot (s_0\cdot H))(x)$.

We could now make this equation our starting point. Having chosen a matrix function $S=S(H)\in\R^{2\times 2}$, $\textrm{tr}\: S=0$,
we hope to obtain an action of $\R$ on (parts of) $\mathcal C$
from \eqref{pdets}. Of course, whether or not that will actually be the case would need further investigation in any given case; \eqref{pdets}
is a non-linear, non-local (thanks to the arbitrary dependence of $S$ on $\{H(x): x\in\R \}$) PDE, so in principle anything
could happen, and, as I just pointed out, I'm not particularly optimistic about our prospects if $S$ is just a randomly chosen
matrix function.

If we make the simplest possible choice, $S\equiv 0$, then \eqref{pdets} becomes
$\partial_s H-\partial_x H=0$, so recovers the plain shift $(s\cdot H)(x)=H(x+s)$.
A second important and more interesting choice of $S$ is given by
\[
S(H) = \begin{pmatrix} 0 & V(0) \\ 1 & 0 \end{pmatrix}, \quad V(x)\equiv \frac{1}{4}\det H''(x) .
\]
This extends the twisted shift that corresponds to the shift $V(x)\mapsto V(x+s)$ on Schr\"odinger operators to general canonical systems
(with a twice differentiable coefficient function $H$). To see that this is the case, check by a computation
that if $H$ corresponds to a Schr\"odinger operator
in the way explained above, so is of the form \eqref{canse}, then indeed $\det H'' = 4V$, with now $V=p''/p=q''/q$ being the potential from the
original Schr\"odinger operator.

In general, for an $S$ of this general type, we \textit{can }guarantee existence of global solutions to \eqref{pdets} and
existence of a group action.
\begin{Proposition}
\label{P5.2}
Suppose that $S=S(H)$ is of the form
\begin{equation}
\label{schrS}
S=F(\det H(0), \det H'(0), \ldots , \det H^{(n)}(0)) ,
\end{equation}
for some function $F\in C^1$, $\textrm{\rm tr}\: F=0$.
Then \eqref{pdets} has a global classical solution for any
initial value $H(x,s=0)=H_0(x)\in C^n(\R)$. Moreover, \eqref{5.2}, \eqref{5.4} yield an $\R$ action on coefficient functions
$H\in C^n(\R)$.
\end{Proposition}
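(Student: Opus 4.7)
The plan hinges on a small determinant-invariance observation that dissolves the circularity worry the author flags just before the statement. For any $M\in \textrm{SL}(2,\R)$ and $2\times 2$ matrix $A$ one has $\det(M^tAM)=\det A$, and since \eqref{5.2} transforms $H^{(k)}(x+s)$ by exactly such a conjugation, $\det(s\cdot H)^{(k)}(x) = \det H^{(k)}(x+s)$ for every $k$. Consequently, the argument of $F$ in \eqref{schrS}, evaluated at $s\cdot H_0$, is determined by $H_0$ alone:
\[
S(s\cdot H_0) = F\bigl(\det H_0(s), \det H_0'(s), \ldots, \det H_0^{(n)}(s)\bigr) \equiv \phi(s),
\]
a continuous function on $\R$ (since $F\in C^1$ and $H_0\in C^n$) that is defined before any flow has been constructed.

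With $\phi$ in hand the construction is immediate. I would solve the linear matrix ODE $\dot T(s) = \phi(s) T(s)$, $T(0) = 1$, globally on $\R$; because $\textrm{tr}\,\phi = 0$, the solution takes values in $\textrm{SL}(2,\R)$. Setting $M(s) := T(s)^{-1}$ and
\[
(s \cdot H_0)(x) := M(s)^t H_0(x+s) M(s),
\]
one obtains a candidate flow that maps $C^n(\R)$ to itself and preserves symmetry, non-negativity, and local integrability. The PDE \eqref{pdets} then follows by direct differentiation of this explicit formula: using $\dot M = -M\phi(s) = -M S(H_s)$, the middle term of $\partial_s H_s$ equals $\partial_x H_s = M^tH_0'(x+s)M$ and cancels, leaving $\dot M^t H_0(x+s) M + M^t H_0(x+s) \dot M = -S^t(H_s) H_s - H_s S(H_s)$, as required.

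For the group-action property the clean route is to verify the cocycle identity of Proposition \ref{P5.1} for $T$. If $H_1 := t \cdot H_0$, then by the determinant invariance $\phi_{H_1}(s) = \phi_{H_0}(s+t)$, so the ODE defining $T(s; H_1)$ is a pure time shift of the one for $T(s; H_0)$ started at time $t$; hence $T(s+t; H_0) = T(s; t \cdot H_0) T(t; H_0)$, and Proposition \ref{P5.1} converts this into the group law $(s+t) \cdot H_0 = s \cdot (t \cdot H_0)$. The one step that could look like a genuine obstacle is the self-referential definition of $S(s\cdot H)$ along an as-yet-unknown flow; the determinant invariance sidesteps it entirely, and the rest is standard linear ODE theory together with a short direct calculation.
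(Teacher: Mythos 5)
Your proposal is correct and follows essentially the same route as the paper: exploit the fact that $\det(M^tAM)=\det A$ for $M\in\textrm{SL}(2,\R)$ so that the coefficient in the ODE for $T$ can be computed from $H_0(x+s)$ alone (the paper motivates this via Proposition \ref{P5.3} and then uses the same invariance), define $H(x,s)=T(s)^{-1t}H_0(x+s)T(s)^{-1}$, and check \eqref{pdets} by direct differentiation. Your explicit verification of the cocycle identity via $\phi_{t\cdot H_0}(s)=\phi_{H_0}(s+t)$ and ODE uniqueness is just a spelled-out version of the paper's remark that the \emph{moreover} part "has the same proof."
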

To motivate the proof, let me mention the following fact, which is also of some independent interest. It says that the determinant
of $H$ is preserved along the characteristics $x+s=c$.
\begin{Proposition}
\label{P5.3}
Suppose that $H(x,s)$ solves \eqref{pdets}. Then $\det H(x,s) = \det H(x+s, 0)$.
\end{Proposition}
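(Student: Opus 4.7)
The plan is to differentiate $\det H(x,s)$ along the characteristics $x+s=c$ and show the derivative vanishes. Parametrize the characteristic by $s\mapsto (c-s, s)$, and let $D(s) := \det H(c-s, s)$. By the chain rule,
\begin{equation*}
\frac{dD}{ds} = \left(\frac{\partial H}{\partial s} - \frac{\partial H}{\partial x}\right)\Bigr|_{(c-s,s)} \text{ — contracted appropriately against } \det.
\end{equation*}

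The key computational tool is Jacobi's formula for the derivative of a determinant in terms of the adjugate: for a (smooth) matrix valued function $A(t)$ and a $2\times 2$ matrix $A$,
\begin{equation*}
\frac{d}{dt}\det A = \operatorname{tr}\bigl(\operatorname{adj}(A)\, \dot{A}\bigr),
\end{equation*}
together with the identities $\operatorname{adj}(H)\,H = H\,\operatorname{adj}(H) = (\det H)\,I$. Combining this with the PDE \eqref{pdets} gives
\begin{equation*}
\frac{dD}{ds} = \operatorname{tr}\bigl(\operatorname{adj}(H)(-S^tH - HS)\bigr) = -\operatorname{tr}(H\operatorname{adj}(H)\,S^t) - \operatorname{tr}(\operatorname{adj}(H)H\,S).
\end{equation*}
(For the first term I cyclically permuted under the trace.) Both terms then simplify to $-(\det H)\operatorname{tr}(S)$, which vanishes because $\operatorname{tr}\: S = 0$ by hypothesis. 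Hence $D$ is constant in $s$; evaluating at $s=0$ yields $\det H(c-s,s) = \det H(c,0)$, i.e.\ $\det H(x,s) = \det H(x+s,0)$, as claimed.

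There is no serious obstacle: the argument is essentially a one-line computation once one recognizes that the right-hand side of \eqref{pdets} has the structure $-S^tH - HS$ with $\operatorname{tr}\: S=0$, so that the characteristic derivative of $\det H$ picks up only the trace of $S$. The only point worth flagging is the mild regularity issue — one wants $H$ to be at least $C^1$ in both variables so that Jacobi's formula and the chain rule apply pointwise. In the classical solution setting of Proposition \ref{P5.2} this is automatic; in any weaker framework one would first reduce to the smooth case by approximation or interpret the identity in the sense of distributions along characteristics.
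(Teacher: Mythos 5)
Your proof is correct and takes essentially the same approach as the paper: differentiate $\det H$ along the characteristics $x+s=\mathrm{const}$ and use the structure $-S^tH-HS$ together with $\operatorname{tr} S=0$ to see that this derivative vanishes. The only difference is cosmetic — the paper checks the vanishing by writing out the entries of $H$ and $S$ explicitly, whereas you invoke Jacobi's formula $\frac{d}{dt}\det A=\operatorname{tr}\bigl(\operatorname{adj}(A)\,\dot A\bigr)$, which packages the same computation a bit more cleanly.
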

\begin{proof}
This is unsurprising since \eqref{pdets} was meant to be a rewriting of \eqref{5.2}, and $\det M=1$.
We'll check it by a calculation. It is helpful to introduce the new variables $u=s-x$, $v=s+x$. Then $2\partial H/\partial u = -S^tH-HS$,
and we want to show that $(\partial/\partial u) \det H=0$. Write
\[
H = \begin{pmatrix} a & b \\ b & c \end{pmatrix} , \quad S = \begin{pmatrix} \alpha & \beta \\ \gamma & -\alpha \end{pmatrix} .
\]
Then, with $X' \equiv \partial X/\partial u$,
\begin{gather*}
a' = - \alpha a - \gamma b ,\\
2b' = -\beta a - \gamma c ,\\
c' = \alpha c - \beta b ,
\end{gather*}
and plugging this into $(\det H)' = a'c + ac' -2bb'$ will show that indeed $(\det H)'=0$, as claimed.
\end{proof}
\begin{proof}[Proof of Proposition \ref{P5.2}]
Motivated by Proposition \ref{P5.3}, we make the following attempt: Given an initial value $H_0(x)$ for $H$,
define $T(s)$ by solving
\begin{equation}
\label{5.6}
\dot{T}(s) = F(\det H_0(s), \det H'_0(s), \ldots , \det H^{(n)}_0(s)) T(s), \quad T(0)=1 .
\end{equation}
If we compare this with \eqref{5.4}, then we find that this is not exactly the ``correct'' way to compute $T$,
since we've replaced the (unknown, at this point) action $(s\cdot H_0)(x)$ by $H_0(x+s)$;
however, by Proposition \ref{P5.3}, we expect that this
will have no consequences as far as computation of the determinants is concerned. Indeed, if we now define
\begin{equation}
\label{5.5}
H(x,s) = T(s)^{-1t}H_0(x+s) T(s)^{-1} ,
\end{equation}
then it is in fact obvious that $\det H_0^{(j)}(x+s) = \det (\partial^j H/\partial x^j)(x,s)$. Moreover, $(d/ds)T^{-1} = -T^{-1}S$,
by \eqref{5.6}, and now a computation shows that $H(x,s)$ from \eqref{5.5} solves \eqref{pdets}.

The \textit{moreover }part has the same proof: define $T$ by \eqref{5.6} and then check that this $T$ works in \eqref{5.2}, \eqref{5.4}.
\end{proof}

The next result completes the analogy between plain and twisted shifts by providing an $\SL$-cocycle for
twisted shifts also, which again updates $m$ functions correctly. Notice that from the point of view of the
cocycles, the twisted shift
is a plain shift, followed by the action of an $\textrm{\rm SL}(2,\R)$ matrix (in other words, an automorphism
of $\C^+$).
\begin{Theorem}
\label{T5.1}
Consider the action \eqref{5.2} of $G=\R$, and denote the associated $\textrm{\rm SL}(2,\R)$-cocycle from Proposition \ref{P5.1}
by $T_0=T_0(s;H)=M^{-1}$.
Let $T_1=T_1(x;H)$ be the $\SL$-cocyle of the shift: $dT_1/dx =zJHT_1$, $T_1(0)=1$.
Then $T(s;H)=T_0(s;H)T_1(s;H)$ defines an $\SL$-cocycle for this action, and
\begin{equation}
\label{5.9}
\pm m_{\pm}(z; s\cdot H) = T(s;H) (\pm m_{\pm}(z;H)) .
\end{equation}
\end{Theorem}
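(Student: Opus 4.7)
The plan is to verify the two assertions of Theorem \ref{T5.1} — that $T=T_0T_1$ is an $\SL$-cocycle for the twisted shift and that it updates $\pm m_\pm$ correctly — both of which will rest on a single algebraic observation about how $T_1$ transforms when we pass from the plain shift on $H(\cdot+s)$ to the twisted version $s\cdot H$. Note first that $T\in\SL$ automatically: $T_1$ is entire in $z$ because it is the transfer matrix of a linear ODE whose coefficients depend holomorphically on $z$, while $T_0=M^{-1}$ is real-valued and $z$-independent, so their product is entire in $z$ and takes values in $\mathrm{SL}(2,\R)$ on the real line.

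The key fact I would use is the symplectic identity $MJM^t=J$ for every $M\in\mathrm{SL}(2,\R)$, which makes $MJ=JM^{-t}$ and hence
\[
M(s)^{-1}JH(x+s)M(s) = JM(s)^tH(x+s)M(s) = J(s\cdot H)(x).
\]
Consequently, if $u(x)$ solves $u'=zJH(\cdot+s)u$, then $\tilde u(x)=M(s)^{-1}u(x)$ solves $\tilde u'=zJ(s\cdot H)\tilde u$. Applied to the transfer matrix itself, this gives the conjugation formula
\[
T_1(x;s\cdot H) = M(s)^{-1}T_1(x;H(\cdot+s))M(s) = T_0(s;H)T_1(x;H(\cdot+s))T_0(s;H)^{-1}.
\]
With this in hand, the cocycle identity for $T$ under the twisted shift is a short calculation: expand $T(s+t;H)=T_0(s+t;H)T_1(s+t;H)$, use the cocycle property of $T_0$ from Proposition \ref{P5.1} and the standard cocycle property $T_1(s+t;H)=T_1(s;H(\cdot+t))T_1(t;H)$ of the shift cocycle, and insert the conjugation formula to convert $T_1(s;H(\cdot+t))$ into $T_1(s;t\cdot H)$ at the cost of a factor that telescopes with $T_0(t;H)$. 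Everything collapses to $T_0(s;t\cdot H)T_1(s;t\cdot H)T_0(t;H)T_1(t;H)=T(s;t\cdot H)T(t;H)$, as required.

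For the $m$-function update, I would apply the same change of variable to the Weyl solution $f_\pm$ for $H$ on $\R_\pm$. A direct calculation using $M^t(s\cdot H)M^{-1}=M^{-t}M^t H(\cdot+s)MM^{-1}=H(\cdot+s)$ gives
\[
\int \tilde u(x)^*(s\cdot H)(x)\tilde u(x)\,dx = \int u(x+s)^*H(x+s)u(x+s)\,dx,
\]
so $\tilde u(x)=M(s)^{-1}f_\pm(x+s)$ lies in $L^2_{s\cdot H}(\R_\pm)$ and, by limit point case, is the Weyl solution for $s\cdot H$ on that half line. Evaluating at $x=0$ and reinterpreting vectors as points on the Riemann sphere yields
\[
\pm m_\pm(z;s\cdot H) = M(s)^{-1}f_\pm(s) = T_0(s;H)T_1(s;H)(\pm m_\pm(z;H)) = T(s;H)(\pm m_\pm(z;H)).
\]

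The main obstacle, as such, is essentially bookkeeping: recognising that the symplectic identity $MJM^t=J$ is what drives both halves of the theorem, and keeping track of the various conventions (the sign in $m_\pm=\pm f_\pm(0)$, the direction of conjugation in $T_1(\cdot;s\cdot H)$ versus $T_1(\cdot;H(\cdot+s))$, and the identification of vectors with points on $\C_\infty$). Once those are aligned, the argument is essentially mechanical; there is no analytic difficulty because $T_0$ is continuous in $s$ by construction and $T_1$ is handled by the same ODE theory as in the Jacobi case.
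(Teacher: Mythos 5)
Your proposal is correct and follows essentially the same route as the paper: the $m$-function update comes from decomposing the twisted shift into a plain shift followed by conjugation $H\mapsto A^tHA$ (whose effect on $m_\pm$ you verify via the Weyl solutions, which is exactly the ``well known fact'' the paper invokes), and the cocycle property reduces to the interchange identity $T_0(t;H)T_1(s;H(\cdot+t))=T_1(s;t\cdot H)T_0(t;H)$, which your conjugation formula establishes by the same ODE-uniqueness argument driven by the symplectic relation $MJM^t=J$ that the paper uses. Only a cosmetic slip: in the $L^2$ computation the displayed identity should read $M^{-t}(s\cdot H)M^{-1}=H(\cdot+s)$, not $M^{t}(s\cdot H)M^{-1}$.
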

\begin{proof}
Let's first recall the following well known fact:
If $A\in\textrm{SL}(2,\R)$ and $H_A(x):=A^t H(x) A$, then
$\pm m_{\pm}(z; H_A) = A^{-1}(\pm m_{\pm}(z;H))$. This is easy to verify by looking at how solutions
are changed under this transformation.

Since $(s\cdot H)(x) = T_0^{-1t}(s; H) H(x+s) T_0^{-1}(s; H)$ and $T_1$ updates $m_{\pm}$ under the shift flow,
this in particular implies \eqref{5.9}. So we only need to verify that $T$ is a cocycle. Since $T_0,T_1$ are cocycles
individually, we have that
\begin{align*}
T(s+t;H)& = T_0(s+t;H) T_1(s+t; H) \\
& = T_0(s; t\cdot H) T_0(t;H) T_1(s; S_tH) T_1(t; H)
\end{align*}
(writing $(S_tH)(x)\equiv H(x+t)$),
so this will follow if we can show that
\begin{equation}
\label{5.11}
T_0(t;H) T_1(s; S_tH) = T_1(s; t\cdot H) T_0(t; H) .
\end{equation}
Call the two sides of this equation $A$ and $B$, respectively, and observe that $A,B$ are absolutely continuous
functions of $s$ (even if $T$ itself isn't, which might happen if we choose a sufficiently irregular $T_0$). We compute
\[
\frac{dA}{ds} = z T_0(t;H)JH(s+t)T_1(s; S_t H) = zJ (t\cdot H)(s) A ;
\]
in the last step we have used the identity $CJC^t=J$, which is valid for $C\in\textrm{\rm SL}(2,\R)$. Similarly,
\[
\frac{dB}{ds} = zJ(t\cdot H)(s) B ,
\]
so $A,B$ solve the same linear ODE, and
since also $A=B$ at $s=0$, we indeed obtain \eqref{5.11}.
\end{proof}
Since $T=T_0T_1$ is a cocycle, if it is sufficiently regular, it must then itself satisfy an equation of the type
\[
\dot{T} = C(s\cdot H) T, \quad T(0)=1 ,
\]
and indeed this is obviously the case, with
\begin{equation}
\label{tsccC}
C(H)=\frac{d}{ds}\,(T_0T_1)\bigr|_{s=0} = S(H) +zJH(0) .
\end{equation}
This concludes our discussion of twisted shifts.

Let me now take a quick look at what could be done with this, following the ideas from Sections 2, 3,
without attempting a systematic discussion.
Let's first try to give a version of Theorem \ref{T1.4} that applies to the new setting.
Here, we will make essential use of the \textit{Weyl disks}
\[
\mathcal D(x,z;H) = T_1(x,z;H)^{-1}\overline{\C^+} \quad (x>0,z\in\C^+) ;
\]
as above, $T_1$ denotes the cocycle for the shift, so $T_1'=zJHT_1$, $T_1(0)=1$, and the closure of $\C^+$ is taken on the Riemann sphere,
so includes $\infty$. This set $\mathcal D(x,z;H)$ is indeed a disk; its boundary is the image of $\R_{\infty}$
under the linear fractional transformation $T_1^{-1}$.
It is in fact a disk that is contained in $\overline{\mathbb C^{+}}$ itself. From the cocycle property it is obvious that these disks are nested as $x$
increases: $\mathcal D(x,z;H) \subseteq \mathcal D(y,z;H)$ if $x\ge y$. So as $x\to\infty$, the disks $\mathcal D(x,z;H)$ shrink to a limiting
object $\bigcap_{x>0} \mathcal D(x,z;H)$, and we are assuming limit point case here, which means that this will be a point. More precisely,
the intersection of the Weyl disks will be the unique point $m_+(z;H)$.

It's also useful to observe that $\mathcal D(x,z;H)$ can be directly understood as those values that $m_+(z;H)$ can still take, given the
values of $H(t)$ on $0\le t\le x$. This is an immediate consequence of the fact that $T_1$ updates $m_+$ when $H$ is shifted, and if
we don't know anything about a half line, then any (generalized) Herglotz function is possible as its $m_+$.
\begin{Theorem}
\label{T5.2}
Fix $z\in\C^+$, and
consider an action of $G=\R\times\R$ on canonical systems and then an orbit $\mathcal O = \{ g\cdot H_0 \}$ of this action.
Assume that there is an associated
$\textrm{\rm SL}(2,\C)$-cocycle $T(t,x;H)$. Suppose that the second copy of $\R$ acts by twisted shifts,
and the cocycle extends the one from Theorem \ref{T5.1} in the sense that
$T(0,x;H)=T_0(x;H)T_1(x;H)$. As for the first copy of $\R$, we assume that $T(t,0;H)$ satisfies the usual equation
$\dot{T}=B(t\cdot H)T$, $T(0)=1$, with $B$ bounded on $\mathcal O$. Finally, assume that $\{ m_{\pm}(z;H): H\in\mathcal O \}$
is contained in a compact subset of $\C^+$.

Then
\[
\pm m_{\pm}(z;g\cdot H) = T(g;H)(\pm m_{\pm}(z;H)) 
\]
for all $g\in G$, $H\in\mathcal O$.
\end{Theorem}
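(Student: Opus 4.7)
The plan is to imitate the proof of Theorem \ref{T1.4}, replacing the $\ell^2$ characterization of $m_\pm$ by the Weyl disk characterization for canonical systems; I discuss $m_+$, since $m_-$ is symmetric. First recall that $m_+(z;K)$ is the unique point on the Riemann sphere lying in every Weyl disk $\mathcal D(x,z;K) = T_1(x,z;K)^{-1}\overline{\C^+}$, $x>0$. Since $T_0(x;K)\in\textrm{SL}(2,\R)$ preserves $\overline{\C^+}$ and $T(0,x;K)=T_0(x;K)T_1(x;K)$, this is equivalent to $T(0,x;K)m_+(z;K) \in \overline{\C^+}$ for all $x>0$; and since the disks are nested, it suffices to verify this along any sequence $x_k\to\infty$.

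It is enough to treat $g=(t,0)$, as a general $g\in G=\R\times\R$ then follows by composition with Theorem \ref{T5.1}. Set $v:=T(g;H)m_+(z;H)$ and $h:=(0,x)$. By the cocycle identity and the abelianness of $G$, $T(0,x;g\cdot H)T(g;H) = T(g;h\cdot H)T(0,x;H)$; applying both sides to $m_+(z;H)$ and using Theorem \ref{T5.1} to identify $T(0,x;H)m_+(z;H) = m_+(z;h\cdot H)$ gives
\begin{equation}
\label{keyP52}
T(0,x;g\cdot H)v \;=\; T(g;h\cdot H)m_+(z;h\cdot H) .
\end{equation}
By the previous paragraph, the conclusion for this $g$ thus reduces to showing that the right-hand side of \eqref{keyP52} lies in $\overline{\C^+}$ for a sequence $x_k\to\infty$.

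The main obstacle is this positivity. I would bridge it by a connectedness argument on
\[
S_H \;:=\; \{\, g\in G : T(g;H)m_+(z;H) = m_+(z;g\cdot H)\,\} :
\]
$0\in S_H$; $S_H$ is closed by continuity of $T$ and of $m_+(z;\cdot)$ on $\mathcal O$; and $S_H$ is invariant under translation by $\{0\}\times\R$, thanks to the cocycle identity and Theorem \ref{T5.1}. Openness in the first variable reduces to the uniform local claim: there is $s_0>0$ such that, for all $|s|<s_0$ and all $H'\in\mathcal O$, $T((s,0);H')m_+(z;H') = m_+(z;(s,0)\cdot H')$. To establish this, apply \eqref{keyP52} with $g=(s,0)$ and $H'$ replacing $H$: the right-hand side becomes $T((s,0);h\cdot H')m_+(z;h\cdot H')$. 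Since $B$ is uniformly bounded on $\mathcal O$, $T((s,0);H'')\to 1$ as $s\to 0$ uniformly in $H''\in\mathcal O$, and since $m_+(z;h\cdot H')$ lies in the compact set $K\subset\subset\C^+$ uniformly in $x$, there is $s_0>0$ (independent of $H'$ and of $x$) so that for $|s|<s_0$ this right-hand side lies in $\C^+$ for every $x>0$. The Weyl disk characterization from the first paragraph then forces $T((s,0);H')m_+(z;H') = m_+(z;(s,0)\cdot H')$, giving the local claim. An open--closed argument on the connected group $G$ then yields $S_H=G$.
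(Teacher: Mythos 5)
Your argument is correct and is essentially the paper's proof: the same reduction to $g=(t,0)$, the same use of the cocycle identity together with Theorem \ref{T5.1} to turn the claim into the statement that $T((t,0);x\cdot H)\,m_+(z;x\cdot H)\in\overline{\C^+}$ for all $x>0$ (i.e.\ membership in all Weyl disks), and the same uniform small-$t$ step obtained from the compactness of the $m$-values and the bound on $B$, iterated along the orbit. The only difference is packaging the iteration as an open--closed argument; note that the closedness of $S_H$ as you justify it invokes continuity of $m_+(z;\cdot)$ on $\mathcal O$, which is not among the hypotheses, but this is harmless because it is also not needed: your uniform (in $H'\in\mathcal O$) local claim together with the $\{0\}\times\R$-invariance already gives $S_H=G$ by stepping in $t$ by increments smaller than $s_0$, which is exactly how the paper concludes.
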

A few remarks on this perhaps:

(1) We now use the variable $x$ (rather than $s$, as before) for the twisted shift part, which I hope is still
rather suggestive notation.

(2) While the list of assumptions may seem lengthy, I would still expect them to be very easy to verify in situations of interest,
by again referring to a combination
of compactness and continuity properties; compare the comments I made earlier on Theorem \ref{T1.4}.
For this, we need a metric on canonical systems that can serve as a replacement for \eqref{djac}, and this is actually
somewhat less straightforward here than in the Jacobi case.
\cite[Section 2]{Remcont} has a discussion of these issues for Schr\"odinger operators.

(3) By focusing on a single orbit we perhaps make the assumptions easier to verify,
but really the main point here is to avoid the issues with existence and uniqueness for general initial conditions. In other words,
we bypass this by \textit{assuming }that we have an $H_0$ that we can evolve, and then we focus on this orbit exclusively.
\begin{proof}
I'll present the argument for $m=m_+$, and what I'll actually show is that $T(g;H)m(H)\in\mathcal D(x;g\cdot H)$ for all $x>0$.
This will give the claim, by Weyl theory, as discussed above. Also, I've dropped all reference to $z\in\C^+$, which was fixed at the beginning.

By the cocycle property and since we already know, from Theorem \ref{T5.1}, that $T(0,x;H)$ updates $m$ correctly,
it suffices to treat the case $g=(t,0)$.
We are given that $T_1(x;H)m(H)\in \overline{\C^+}$ for all $x>0$, and
we want to show that, similarly, $T_1(x; t\cdot H) T(t,0;H) m(H)\in \overline{\C^+}$ for $x>0$. Here $T_1$ may be replaced by $T$
since these only differ by a matrix $T_0\in\textrm{\rm SL}(2,\R)$, which acts by an automorphism of $\C^+$ and thus has no effect on
what we're trying to establish. Then the cocycle property lets us rewrite the claim as
\[
T(t,0; x\cdot H)T(0,x; H) m(H)\in\overline{\C^+} ,
\]
or, since we know that $T(0,x; H)$ updates correctly,
\begin{equation}
\label{5.16}
T(t,0; x\cdot H) m(x\cdot H) \in \overline{\C^+} .
\end{equation}
Now by assumption $m(x\cdot H)\in K$ for some compact subset $K\subseteq\C^+$.
Moreover, for small $t>0$, our assumption on $B$ guarantees that $T(t,0;x\cdot H)=1+o(1)$ as $t\to 0+$, uniformly in $x\ge 0$.
So \eqref{5.16} will indeed
hold, at least for small $0\le t \le \delta$. Notice also that $\delta$ depends only on the compact subset $K\subseteq\C^+$
and on the bound on $B$. So we have established the claim of the theorem for these $t$, but then we can repeat the whole
argument and obtain it for $\delta\le t\le 2\delta$ also, and so forth. Here it is important that we never leave the orbit $\mathcal O$,
so neither $K$ nor the bound on $B$ change and thus the same $\delta>0$ works in all steps.
\end{proof}
Let's now try to look for such actions of $G=\R\times\R$ that extend a twisted shift together with its cocycle.
At a formal level (and this is all we will talk about here), things proceed much as before; differences arise only because
we are dealing with an action of $\R$ by (twisted) shifts rather than a shift \textit{map }(that is, an action of $\Z$).

As in Section 3 and Theorem \ref{T5.2} above,
let's denote by $B=B(H)$ the matrix function that describes the part of the cocycle corresponding to the unknown
evolution we're trying to construct; compare \eqref{todacc}. For the twisted shift part, the corresponding matrix is given by $C$
from \eqref{tsccC}.

In the cocycle identity
\[
T(t,0; x\cdot H)T(0,x; H) = T(0,x; t\cdot H) T(t,0; H) ,
\]
we differentiate with respect to $t$ and then $x$ at $t=0$, $x=0$. The first step yields
\begin{equation}
\label{5.12}
B(x\cdot H) T(0,x;H) = T(0,x; H)B(H) + \partial_t T(0,x; t\cdot H))\bigr|_{t=0} ,
\end{equation}
and then from this we obtain that
\[
(\partial_x B)(H) + B(H)C(H) = C(H)B(H) + (\partial_x X)\bigr|_{x=0} ,
\]
with $X$ denoting the second term on the right-hand side of \eqref{5.12}. We now make the additional
assumption that these two derivatives could have been taken in the reverse order, and this produces
\[
\partial_t C - \partial_x B = [B,C] ;
\]
as before, the derivatives must be interpreted as
\[
(\partial_x B)(H)\equiv \frac{d}{dx}B(x\cdot H)\bigr|_{x=0} ,
\]
and similarly
for $\partial_t C$. Or, plugging $C$ from \eqref{tsccC} into this, we may rewrite the equation as
\begin{equation}
\label{canzc}
\partial_t S + zJ\partial_t H - \partial_x B = [B,S] + z[B,JH] ,
\end{equation}
and this is the basic evolution equation (``zero curvature equation'') here.

As we discussed in Section 3, this can (and probably should) be thought of as the compatibility condition
on the individual cocycles (twisted shift and the one of the unknown flow, which is described by $B$) that guarantees
that gluing these together will result in a joint cocycle.

We now have a very general framework; for every twisted shift, which will provide a matrix function $S(H)$,
the zero curvature equation \eqref{canzc} potentially gives a hierarchy of evolutions. As in the Toda case, the cocycle that
we started out with is not just a collection of arbitrarily evolving matrix functions; on the contrary, it again
has the important property that it updates the $m$ functions correctly (that is, assuming that we are able to verify
the assumptions of Theorem \ref{T5.2}, which we don't even attempt here).

Unlike in the Toda case, there doesn't seem to be an obvious way to make \eqref{canzc} spit out hierarchies of evolutions,
and in fact some negative results have been obtained by Hur and Ong \cite{HOng}.
They discuss the case $S=0$ in \eqref{schrS}, corresponding to the plain shift, so look at the simplified equation
\begin{equation}
\label{5.14}
zJ\partial_t H - \partial_x B = z[B,JH] ,
\end{equation}
and then search for $B$'s with polynomial dependence on $z$ that satisfy this compatibility condition.
For constant $B$ (of trace zero), this recovers the flows
\[
(t\cdot H)(x) = e^{-tB^t} H(x) e^{-tB} 
\]
that conjugate $H$ by an $\textrm{\rm SL}(2,\R)$ matrix and that of course could have been
defined directly. Note that these do preserve the property that $H\ge 0$. Whether or not that will be the case
in more general situations is quite unclear. The cocycle is given by $T(t,0;H)=e^{tB}$.

So everything is in perfect order in the degree zero case, but now
Hur-Ong show that \eqref{5.14} will not yield evolutions with the desired properties
beyond degree zero. This result also confirms that the added flexibility of twisted shifts may be crucial.

So a more productive choice of $S$ would be the one from \eqref{schrS}; we already know that we have the
KdV hierarchy on the subclass of canonical systems that correspond to Schr\"odinger operators,
and now our formalism will extend this evolution to general canonical systems. This works because
we have found a way of expressing $V=(1/4)\det H''$ in terms of $H$, and in such a way that this makes
sense for arbitrary (smooth) $H$, not necessarily coming from a Schr\"odinger equation. The same procedure
can now be applied to the $B$'s from the classical KdV hierarchy to extend everything to canonical systems.

I'll leave the matter at that, but clearly the general properties of the formalism need further investigation.

Let me close this paper by making a few very quick remarks on how to adapt this material
to the case of evolutions commuting with
a twisted shift \textit{map }rather than a flow; in other words, we have an action of $\Z$ rather than of $\R$,
and so we return to the situation already familiar to us from the Jacobi case.

We proceed as above. As in \eqref{5.2}, we consider maps induced by group elements $n\in G=\Z$ of the form
\begin{equation}
\label{ts1}
(n\cdot H)(x) = M(n;H)^t H(x+n) M(n;H) ,
\end{equation}
for some matrix function $M: \Z\times\mathcal C\to \textrm{SL}(2,\R)$. This will define a group action of $\Z$ if $T=M^{-1}$ has the
cocycle property, and this is the same as saying that $T$ must be of the form
\begin{equation}
\label{ts2}
T(n;H) = \begin{cases} A((n-1)\cdot H) \cdots A(H) & n\ge 0 \\
A^{-1}(n\cdot H) \cdots A^{-1}(-1\cdot H) & n<0
\end{cases}
\end{equation}
for some matrix function $A$ taking values in $\textrm{SL}(2,\R)$.

We can now finish the discussion that I started after stating \eqref{5.2}, \eqref{5.4} and Proposition \ref{P5.1}.
In the current situation, this issue now
takes the following form: Why can't I just, conversely, pick a matrix function $A$ and obtain a cocycle and a $\Z$ action
from \eqref{ts1}, \eqref{ts2}? Of course, as before, there is the obvious concern that \eqref{ts2} will only define a cocycle if I had
the action already, but to define this via \eqref{ts1}, I need the cocycle. We can make this more explicit: having chosen an $A$,
\eqref{ts1}, \eqref{ts2} will formally produce
\[
(1\cdot H)(x) = A(H)^{-1t} H(x+1) A(H)^{-1} ,
\]
but this can be part of a group action only if the map $H\mapsto 1\cdot H$ is bijective, and it's easy to cook up $A$'s for
which it fails to be injective.

In any event, if we do have a $\Z$ action constructed in this way, then we will again call it a
\textit{twisted shift. }These twisted shift maps not only mimic what we did above, but they also again arise naturally
if we want to think of the Toda hierarchy in terms of canonical systems. Namely, if a Jacobi equation is rewritten as a canonical
system, by a procedure that is similar to the one used above for Schr\"odinger equations, then the shift on $J$ again induces
a \textit{twisted }shift on the corresponding canonical system. Everything is completely analogous to the Schr\"odinger case;
in particular, the twisting is again done by the transfer matrix
$A=\left( \begin{smallmatrix} -b_1/a_1 & 1/a_1 \\ -a_1 & 0 \end{smallmatrix} \right)$
of the Jacobi matrix at $z=0$.

Returning to the general situation then, we can next establish an analog of Theorem \ref{T5.1}: twisted shift maps come
with an $\SL$-cocycle, which, moreover, will update $\pm m_{\pm}$ correctly.

Finally, suppose again that we have an action of $G=\R\times\Z$ and an associated $\SL$-cocycle $T(g;H)$,
with $\Z$ acting by a twisted shift map and $T$ extending the twisted shift cocycle from the analog of Theorem \ref{T5.1} that
I just mentioned. Let's introduce some notation here and let's call this twisted shift cocycle $S(n;H)$ (please do not confuse this
with the $S$ from above, which was also related to the twisted shift cocycle, but in a different way). So we are then assuming
that $T(0,n;H)=S(n;H)$. 
Let $B(H)$ again be the matrix that describes the part of the cocycle corresponding to the (unknown) flow.
Then exactly the same (formal, without further clarification of the precise assumptions) calculation that led to \eqref{zctoda}
produces the zero curvature equation
\[
\dot{S}(H) = S(1\cdot H) B(H) - B(H) S(H) ;
\]
here, I've used the (perhaps confusing) short-hand notation $S(H)\equiv S(1;H)$, and this also equals
$S(H)=A(H)T_1(1;H)$, where $T_1$ again denotes the transfer matrix of the plain shift, $T'_1=zJHT_1$, $T_1(0)=1$.

\end{document}